\newcommand*{\greysquare}{\textcolor{gray}{\blacksquare}}
\newcommand*{\greytriangle}{\textcolor{gray}{\blacktriangledown}}
\newcommand*{\greycircle}{\textcolor{gray}{\bullet}}
\newtheorem{thm}{Theorem}
\newtheorem{lem}{Lemma}
\newtheorem{cor}{Corollary}
\newtheorem{rem}{Remark}
\title{Asymptotic behaviour of a solution to a nonlinear equation modelling capillary rise}
\author{\L ukasz P\l ociniczak\thanks{Faculty of Pure and Applied Mathematics, Wroc{\l}aw University of Science and Technology, Wyb. Wyspia{\'n}skiego 27, 50-370 Wroc{\l}aw, Poland},
	Mateusz \'Swita{\l}a\footnotemark[1]$\;^,$\footnote{\underline{Corresponding Author}, e-mail: mateusz.switala@pwr.edu.pl}}
\date{}
\begin{document}
\maketitle

\begin{abstract}
	We are concerned with the asymptotics and perturbation analysis of a singular second-order nonlinear ODE that models capillary rise of a fluid inside a narrow vertical tube. We prove the convergence of the exact solution to a unperturbed solution when a nondimensional parameter decrease to zero. Furthermore, we provide an accurate method of approximating the asymptotic solution for large times. Due to the a fact that the nonlinear component in the main equation does not satisfy the Lipschitz continuity condition the methods used to prove the main theorems are nonstandard, require careful analysis, and can be useful in dealing with similar nonlinear ODEs. \vspace{2em}\\
	
\noindent\textbf{Keywords}: nonlinear oscillations, singular perturbations, asymptotic behaviour, non-Lipschitzian function, Washburn's equation, capillary rise
\end{abstract}

\section{Introduction}

Capillary rise is an extraordinary physical phenomenon that is ubiquitous in nature and the effects of which we can notice in many everyday situations. This natural phenomenon is mainly responsible for water distribution in plants, in particular it causes the main driving force in the process of water supply to the upper branches of trees. Capillary rise can also be observed in porous media such as soils and rocks. In addition to its obvious natural applications, the discussed phenomenon appears also in industry (see \cite{zhmud} and \cite{cheng11}), e.g. in the printing processes and in a technique called thin layer chromatography to separate mixtures from substances. The appropriate mathematical models describing the behavior of capillary rise are gaining in importance in view of the applications of a capillary flow in industry.

 One of the first equations modelling the dynamics of capillary rise was introduced by E. W. Washburn in 1921 (see \cite{wash1921}). However, after subsequent researches and conducted experiments by others scientists, many improved mathematical models were proposed in order to provide the more accurate description of that extraordinary physical phenomenon.  In our previous work \cite{Plo18} we provided brief explanation of each component in the main governing equation modelling the capillary rise process in a narrow vertical tube 
 \begin{equation}
	\frac{8 \nu }{r^2} h h'+\rho g h+\rho \frac{\,d}{\,d t}\left(h h'\right)=\frac{2 \gamma \cos{\theta}}{r},
	\label{eqn:gov1}
	\end{equation} 
	where $h=h(t)$ is the liquid column height at time $t$, $r$ is a radius of the capillary tube, and $\mu,\, \rho$ denote viscosity and density of liquid respectively. As usual, $g$ denotes the gravitational acceleration and $\gamma$ denotes the surface tension. It is worth emphasis that in the current work the contact angle, here denoted by $\theta$, preserves a constant value during the flow. Nevertheless, some results \cite{pop,blake,joos} indicates the dependence of contact angle on the velocity of flow. The meticulous investigation of the \eqref{eqn:gov1} with various models of the velocity dependent contact angle will be an object of study in further works. Moreover, we assume that the narrow tube is initially filled with air and the walls of tube are smooth. For simplification and clarity of the main process, we do not take into account the influence of the container walls where the narrow tube is immersed. The comprehensive derivation of above governing equation can be found in \cite{masoodi12,das13, ramon08} and \cite{Plo18}. In our analysis we assume that the initial height is equal to zero and to complete the description of the governing equation we have to choose a proper initial condition for velocity of the flow. In order to ensure the consistency of the problem we have to take $h'(0)=0$. The discussion on the different choice of initial condition as well as the extensive analysis for more physical initial condition can be find in \cite{zhmud,loren2002,szekely71}. Furthermore, the rigorous mathematical analysis of the governing equation \eqref{eqn:gov1} for both cases of initial conditions Reader can find in \cite{Plo18}. Recently, a very interesting geometrical analysis of (\ref{eqn:gov1}) appeared in \cite{Zha18} and the Reader is invited to compare these two different approaches.
	
	 As was shown in various experiments, the flow in a narrow tube may not be monotone \cite{loren2002, quere1997}. Thus a two regimes are distinguished where the fluid column height either monotonically increase or oscillates near the Jurin's height (the point, where the capillary pressure balances the weight of a fluid). The change of the behavior of flow from monotone to oscillatory for certain values of nondimensional parameter was also proved analytically (see \cite{Plo18}). Nonlinear oscillations are very often difficult to analyze due to its complexity and unique character, hence, a non standard methods have to be used to analyze such a phenomenon efficiently. Nonlinear oscillations modelled by differential equations has been extensively analyzed in many papers (see for ex. \cite{Kam78,Phi89,Kon99}). A nonlinear dynamical systems are also widely studied in two fundamental monographs \cite{Guc13} and \cite{Nay08}. That monographs concern mostly the theoretical and applicable mathematical results related to the dynamical systems and nonlinear oscillations. 
	 
	 To proceed further with the asymptotic analysis we have to transform \eqref{eqn:gov1} into a more convenient form. After reducing the governing equation into a dimensionless form (see \cite{masoodi12, Plo18}) and using a simple transformation for both independent variable and height function (for more details see \cite{Plo18}) we are able to write \eqref{eqn:gov1} in following form
	 \begin{equation}
		u''(s)+\frac{1}{\sqrt{\omega} }u'(s)+\sqrt{2 u(s)}=1,\\
		\label{eqn:eqa}
		\end{equation} 
		with 
		\begin{equation}
		u(0)=0, \quad u'(0)=0,
		\label{eqn:IC3}
		\end{equation}
		where, prime denotes a differentiation with respect to the independent variable. In subsequent analysis we will rather use  parameter $\epsilon=1/\sqrt{\omega}$ instead of $\omega$. Whereas, the parameter $\omega$ is defined as follows
		\begin{equation}
	\omega = \frac{\rho ^3 g^2 r^5}{128 \mu ^2 \gamma \cos\theta}.
	\label{eqn:omega}
	\end{equation}
	    Obviously, we are not able to solve \eqref{eqn:eqa} explicitly. Hence, a advanced approximation methods need to be used to analyzed the above equation and its solution in most accurate manner.

	The following article is composed of the two main sections concerning the perturbation with respect to $\epsilon\rightarrow 0^+$ and asymptotic behavior for large times for \eqref{eqn:eqa}. In the next section we use a perturbation analysis to investigate the solution of the main equation for small values of $\epsilon$. Due to appearance of the non-Lipschitz function in the main equation we can not use any standard theorem to prove the convergence. Therefore, a new approach was proposed to prove that the exact solution approaches to the zeroth order approximation as $\epsilon\to 0^+$.  Moreover, an asymptotic behavior of the solution for small times is also presented.
	
	In Section \ref{SectionAsym} we present an asymptotic analysis of the oscillatory behavior for \eqref{eqn:eqa} for large times. For this reason we consider only the values of dimensionless parameter $\epsilon$ for which the solution oscillates around the Jurin's height ($\epsilon < 2$). We introduce a convergent method of finding a successive approximations of the solution to \eqref{eqn:eqa}. The numerical analysis revealed that for a certain values of $\epsilon$ the error between the exact solution and the approximate one might be negligible even for the relatively small values of independent variable.
	
\section{Perturbation analysis for small values of $\epsilon$}
Let us now recall the main equation with appropriate initial conditions
\begin{equation}\label{eq:GovFinal}
    u''(s)+\epsilon u'(s)+\sqrt{2 u(s)}=1,\quad u(0)=0,\, u'(0)=0.
\end{equation}
Using regular perturbation methods with respect to the small parameter $\epsilon$ we can obtain the zeroth order approximation to the exact solution of \eqref{eq:GovFinal}
\begin{equation}\label{eq:ZeroOrder}
    u_0(s)=\frac{1}{72}\sum\limits_{i=1}^{\infty}(s-6(i-1))^2 (6 i-s)^2\mathbbm{1} _{\{s\in [6(i-1),6 i]\}}.
\end{equation}
The above function is depicted on the Figure \ref{fig:ZerothOrder}. We can see that it represents an oscillation with a period equal to $6$.

\begin{figure}
	\centering
	\includegraphics[scale=1.3]{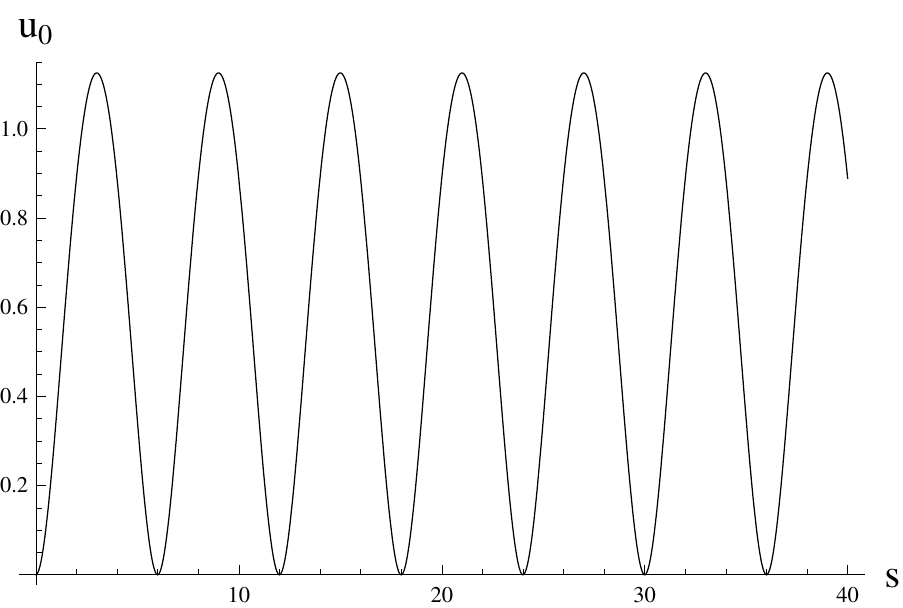}
	\caption{The leading order perturbation expansion of the solution to (\ref{eq:GovFinal}) for $\epsilon\rightarrow 0$.}
	\label{fig:ZerothOrder}
\end{figure}

To see how to obtain $u_0$ first we put $\epsilon=0$ into \eqref{eq:GovFinal} and get a second order equation without the derivative term
\begin{equation}\label{eq:UndampedU}
    u_0''+\sqrt{2 u_0}=1, \quad u_0(0)=0,\quad u_0'(0)=0.
\end{equation}
Since the above does not explicitly contain the independent variable we can introduce another function $g(u_0):=u_0'$ and using $u_0''=g \, d g/d u_0$ we are able to reduce the order what leads to
\begin{equation}
    g \frac{d g}{d u_0}+\sqrt{2 u_0}=1, \quad g(0)=0,
\end{equation} 
where $g$ is now the sought function while $u$ is treated as an independent variable. We can easily integrate the above equation to get $g(u_0)$. Then, using the definition of $g$ we have
\begin{equation}
    u_0'(x)=\pm \sqrt{\frac{2}{3}} \sqrt{-\biggl(2 \sqrt{2} \sqrt{u(x)}-3\biggr) u_0(x)}, \quad u_0(0)=0.
\end{equation}
If we integrate above equation we get two solutions but only one of them is oscillatory for $\epsilon=0$ with $u_0(s)\ge 0$ and $s\ge 0$. Thus, we have
\begin{equation}
    u_0(s)=\frac{s^2 (6-s)^2}{72}\mathbbm{1} _{\{s\in [0,6 ]\}}.
\end{equation}

To verify the periodicity of \eqref{eq:ZeroOrder} we multiply \eqref{eq:GovFinal} by $u'$ and integrate to get the energy equation
\begin{equation}\label{eq:Energy}
    \frac{u'^2}{2}+\frac{2 \sqrt{2}}{3}u^{3/2}-u=-\epsilon \int_{0}^{s}u'(x)^2 d\, x
\end{equation}
On the left hand side we have a sum of the kinetic and potential energy. It is easy to notice that the potential energy has a minimum. Moreover, for the boundary points of subintervals given in  \eqref{eq:ZeroOrder} for all $i$, the minimum value of potential energy is not achieved. Hence, the solution will oscillate with decaying amplitude (for a different treatment see \cite{Zha18}).

The above considerations were only formal and we do not know whether the exact solution of \eqref{eq:GovFinal} converges to $u_0(s)$ as $\epsilon \to 0^+$. The standard convergence theorems from perturbation theory cannot be applied here because the Lipschitz condition for nonlinearity is not satisfied. As we will see with a suitable regularization we are able to avoid this requirement. 

Before we proceed to the main result we will derive certain inequalities that will be useful in subsequent analysis.  The following Lemma provides an asymptotic behavior of function $u(s)$ and its derivative $u'(s)$ for $s\to 0^+$.
\begin{lem}\label{Lemma1}
	If $u(s)$ is a solution of \eqref{eq:GovFinal} then for  $s\in [0,2/(1+\epsilon)]$ we have the following estimates.
	\begin{enumerate}[a)]
		\item\label{ad:1} Function $u$:
	\begin{equation}\label{ineq:SmallValues}
	    \frac{1}{3}\frac{s^2}{2}\le u(s)\le \frac{s^2}{2}.
	\end{equation}
	\item\label{ad:2} The derivative $u'$:
	\begin{equation}\label{ineq:SmallValuesDiff}
	    u'^2(s)\le s^2-\frac{2 \sqrt{2}}{9}s^3.
	\end{equation}
	\end{enumerate}
	Furthermore, for $s\to 0^+$ the following asymptotic behavior holds
\begin{enumerate}
    \item[i)] Function $u$:
    \begin{equation}\label{AsymU}
        u(s)=\mathcal{O}(s^2),\quad \textrm{as}\quad s\to 0^+.
    \end{equation}
    \item[ii)] The derivative $u'$:
    \begin{equation}
        u'(s)=\mathcal{O}(s),\quad \textrm{as}\quad s \to 0^+.
    \end{equation}
\end{enumerate}
\end{lem}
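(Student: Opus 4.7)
I would prove part \ref{ad:1} first by a bootstrap on the ODE near $s=0$, then derive part \ref{ad:2} from the energy identity \eqref{eq:Energy} using the bounds just obtained, and finally read off the asymptotic statements (i)--(ii) as immediate corollaries.

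For the upper bound in \eqref{ineq:SmallValues} the key preliminary step is to show $u'(s)\ge 0$ throughout $[0,2/(1+\epsilon)]$. Since $u''(0)=1$, there is a maximal interval $[0,T)$ on which $u'>0$; on this interval the non-negativity of $\epsilon u'$ and of $\sqrt{2u}$ forces $u''\le 1$, so two integrations yield $u'(s)\le s$ and $u(s)\le s^2/2$. Plugging these back into the ODE gives $u''(s)\ge 1-(1+\epsilon)s$, hence
\[
    u'(s)\ge s-\frac{1+\epsilon}{2}s^{2},
\]
which is non-negative precisely on $[0,2/(1+\epsilon)]$; a continuity/contradiction argument then gives $T\ge 2/(1+\epsilon)$ and simultaneously the upper bound in \eqref{ineq:SmallValues}. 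Integrating the last lower bound for $u'$ once more produces $u(s)\ge s^2/2-(1+\epsilon)s^3/6$, and an elementary rearrangement shows that this dominates $s^2/6$ exactly when $s\le 2/(1+\epsilon)$, giving the lower bound.

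For part \ref{ad:2} I would start from the energy identity \eqref{eq:Energy}: the dissipation integral is non-positive, so
\[
    u'^{2}(s)\le 2u(s)-\frac{4\sqrt{2}}{3}u(s)^{3/2}=2u(s)\left(1-\frac{2\sqrt{2}}{3}\sqrt{u(s)}\right).
\]
Since $u'^{2}\ge 0$, the same identity forces $u\le 9/8$ along the whole solution, so the bracket above is non-negative and the two factors in the product can safely be bounded independently. Using $u\le s^{2}/2$ on the factor $2u$ and $\sqrt{u}\ge s/\sqrt{6}$ (from part \ref{ad:1}) inside the bracket yields $u'^{2}\le s^{2}-\tfrac{2\sqrt{3}}{9}s^{3}$, which is stronger than \eqref{ineq:SmallValuesDiff} because $\sqrt{3}>\sqrt{2}$. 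The asymptotic statements (i)--(ii) then follow at once from the two-sided envelope for $u$ and from \eqref{ineq:SmallValuesDiff}, respectively.

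The main obstacle is the bootstrap verifying $u'\ge 0$ on the whole interval $[0,2/(1+\epsilon)]$: without a sign on $u'$ the damping term cannot be dropped from the ODE in either direction, and both bounds in part \ref{ad:1} would have to be propagated through an implicit integral inequality involving $\int_{0}^{s}u'(x)^{2}\,dx$. Once the envelope $s^{2}/6\le u\le s^{2}/2$ is in hand, the remaining steps are elementary algebraic consequences of the energy identity.
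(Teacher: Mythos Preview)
Your argument is correct and follows essentially the same route as the paper. For part~\ref{ad:1} the paper simply quotes from \cite{Plo18} the envelope $\bigl(1-\tfrac{1}{3}(1+\epsilon)s\bigr)\tfrac{s^{2}}{2}\le u(s)\le \tfrac{s^{2}}{2}$ and then observes that the prefactor is at least $1/3$ on $[0,2/(1+\epsilon)]$; your bootstrap reproduces precisely this envelope (your lower bound $s^{2}/2-(1+\epsilon)s^{3}/6$ is literally the left side above), so you have in effect supplied a self-contained proof of the cited inequality. For part~\ref{ad:2} both you and the paper start from the energy inequality $u'^{2}\le 2u-\tfrac{4\sqrt{2}}{3}u^{3/2}$; your factorisation and separate bounding of the two factors is clean and, as you note, yields the coefficient $\tfrac{2\sqrt{3}}{9}$, which is indeed sharper than the paper's $\tfrac{2\sqrt{2}}{9}$. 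For (ii) you read $u'(s)=\mathcal{O}(s)$ directly from \eqref{ineq:SmallValuesDiff}, which is all the statement claims; the paper actually goes further and establishes a matching lower bound $u'(\delta)\ge c\,\delta$ near $0$ via the energy identity, but that extra two-sided control is not required by the lemma as stated.
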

\begin{proof}
	From \cite{Plo18} we have 
	\begin{equation}
	    \left(1-\frac{1}{3}(1+\epsilon)s\right)\frac{s^2}{2}\le u(s)\le \frac{s^2}{2} \quad \textrm{for}\quad s\in\left[0,\frac{3}{1+\epsilon}\right].
	\end{equation}
	We see that the function of $s$ on the leftmost side attains its maximal value at $s=2/(1+\epsilon)$. Moreover, $1-\frac{1}{3}(1+\epsilon)s$ is a decreasing function of $s$ hence we have 
	\begin{equation}
	    \frac{1}{3}=\left(1-\frac{1}{3}(1+\epsilon)\left(\frac{2}{1+\epsilon}\right)\right)\leq\left(1-\frac{1}{3}(1+\epsilon)s\right)\quad \textrm{for}\quad s\in \left[0,\frac{2}{(1+\epsilon)}\right].
	\end{equation}
	Thus \eqref{ineq:SmallValues} follows from the above. From \eqref{eq:Energy} we write
	\begin{equation}
	    \frac{u'^2}{2}+\frac{2 \sqrt{2}}{3}u^{3/2}-u\le 0.
	\end{equation}
	and using \eqref{ineq:SmallValues} we arrive at
	\begin{equation}
	    u'^2(s)\leq 2 u-\frac{4 \sqrt{2}}{3}u^{3/2}\le s^2-\frac{2 \sqrt{2}}{9}s^3.
	\end{equation}
	Specifically, we have
	\begin{equation}
	    |u'(s)|\leq s,
	\end{equation}
	what completes the first part of proof.
	
	Now, it immediate to see that \eqref{AsymU} is a consequence of \eqref{ineq:SmallValues}. Next, let us consider the derivative $u'(s)$ for small argument $s$. From \eqref{ineq:SmallValuesDiff} we have
    \begin{equation}\label{ineq:AsymDu}
        |u'(s)|\le \sqrt{s^2-\frac{2\sqrt{2}}{9}s^3}\le s,
    \end{equation}
    for $s\in [0,2/(1+\epsilon)]$. We see that the absolute value of derivative of $u$ is bounded by $s$ with $u'(0)=0$. Now, looking on \eqref{eq:GovFinal} we notice that for a very small argument the second derivative is positive which implies that the function $u'$ must be a  increasing function on the interval $[0,\delta]$ for certain $\delta>0$. Let us rewrite \eqref{eq:GovFinal} in the energy form for $s=\delta$
    \begin{equation}\label{eq:EnergyAsym}
    \frac{1}{2}u'(\delta)^2+\frac{2\sqrt{2}}{3}u(\delta)^\frac{3}{2}-u(\delta)=-\epsilon \int\limits_{0}^{\delta}u'(t)^2\, d t.
    \end{equation}
    Taking advantage of the fact that function $u'(t)$ is increasing on the interval $[0,\delta]$ we can write
    \begin{equation}
      -\epsilon \int\limits_{0}^{\delta}u'(t)^2\, d t \ge -\epsilon \delta u'(\delta)^2. 
    \end{equation}
    Hence, combining the above with (\ref{eq:EnergyAsym}) along with using (\ref{ineq:SmallValues}) we obtain
    \begin{equation}
        u'(\delta)^2\ge \frac{1}{3(\frac{1}{2}+\epsilon \delta)}(\delta^2-\delta^3),
    \end{equation}
    and finally 
    \begin{equation}
        u'(\delta)\ge M_1 \frac{1}{\sqrt{3(\frac{1}{2}+\epsilon \delta)}}\delta,
    \end{equation}
    where $M_1$ is a positive constant, $\delta \ll 1$ and $\delta\le 1- M_1^2$. From \eqref{ineq:AsymDu} and the above inequality it follows that
    \begin{equation}
        M_1 \frac{1}{\sqrt{3(\frac{1}{2}+\epsilon \delta)}}\delta\le u'(\delta)\le \delta,
    \end{equation}
    for $\delta \to 0^+ $ what completed the proof.
\end{proof}
Now, we are able to introduce the main result in which we show that the solution of (\ref{eq:GovFinal}) converges uniformly to (\ref{eq:ZeroOrder}) as $\epsilon \to 0^+$. 
\begin{thm}\label{thm:Error}
	Let $u(t)$ be the solution of \eqref{eq:GovFinal}. Then, the following asymptotic behaviour holds
	\begin{equation}\label{TheoremResult}
	\lim\limits_{\epsilon\to 0^+}|u(s)-u_0(s)|=0,
	\end{equation}
	uniformly for $s\in [0,T]$ for any fixed $T>0$.  
\end{thm}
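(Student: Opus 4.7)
The plan is to use a compactness argument combined with uniqueness of the unperturbed limit problem, thereby sidestepping the absence of a Lipschitz bound on $\sqrt{2u}$ altogether. I expect the main obstacle to be the uniqueness step: a direct Gronwall estimate on $u_\epsilon - u_0$ is unavailable precisely because the nonlinearity fails to be Lipschitz at every zero of $u_0$ (the points $s=6k$).

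First, I would extract $\epsilon$-uniform \emph{a priori} bounds from the energy identity \eqref{eq:Energy}. Since the dissipation $-\epsilon\int_0^s u'(t)^2\,dt$ is nonpositive, it yields
\begin{equation*}
\tfrac12 u'(s)^2 \;\le\; u(s) - \tfrac{2\sqrt{2}}{3}u(s)^{3/2},
\end{equation*}
and the right-hand side is nonnegative only for $u\le 9/8$. This gives an $\epsilon$-uniform $L^\infty$ bound on $u$, hence on $u'$, and via the ODE also on $u''$. Consequently, on $[0,T]$ the family $\{u_\epsilon\}$ is uniformly bounded and equi-Lipschitz.

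Second, by Arzelà--Ascoli, one can extract a subsequence $\epsilon_n\to 0^+$ such that $u_{\epsilon_n}\to u^{\ast}$ uniformly on $[0,T]$. Writing \eqref{eq:GovFinal} in the integrated form
\begin{equation*}
u_\epsilon(s) = \int_0^s (s-t)\bigl[\,1 - \sqrt{2u_\epsilon(t)} - \epsilon u_\epsilon'(t)\,\bigr]\,dt,
\end{equation*}
the continuity of $\sqrt{\cdot}$ on the compact range of $u_\epsilon$, together with the uniform bound $\|u_\epsilon'\|_\infty \le C$, allows passage to the limit; the damping term drops out because $\epsilon_n\|u_{\epsilon_n}'\|_\infty\to 0$. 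Thus $u^{\ast}\in C^2([0,T])$ and satisfies \eqref{eq:UndampedU}.

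The hard part is to show $u^{\ast}=u_0$, i.e.\ that \eqref{eq:UndampedU} admits a unique $C^2$ solution. Any such solution inherits the conservation law $\tfrac12 (u^{\ast})'^2 + \tfrac{2\sqrt{2}}{3}(u^{\ast})^{3/2} - u^{\ast} = 0$, so
\begin{equation*}
(u^{\ast})'(s) = \pm\sqrt{\,2u^{\ast}(s) - \tfrac{4\sqrt{2}}{3}u^{\ast}(s)^{3/2}\,}.
\end{equation*}
The radicand vanishes only at $u^{\ast}=0$ and $u^{\ast}=9/8$, and at each of these points $(u^{\ast})''= 1-\sqrt{2u^{\ast}}$ is strictly nonzero ($+1$ at $0$, $-1/2$ at $9/8$), so the turning points are isolated and the sign choice of $(u^{\ast})'$ is uniquely dictated. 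On each monotone arc between consecutive turning points the separable ODE $du^{\ast}/\sqrt{2u^{\ast}-\tfrac{4\sqrt{2}}{3}(u^{\ast})^{3/2}}=\pm ds$ has an integrable singularity at $u^{\ast}=0$, so integration pins down a unique profile, and direct computation reproduces exactly the explicit formula \eqref{eq:ZeroOrder}. Hence $u^{\ast}=u_0$. Since every subsequence of $\{u_\epsilon\}$ admits a further subsequence converging uniformly to the same limit $u_0$, the full family converges uniformly on $[0,T]$, which establishes \eqref{TheoremResult}.
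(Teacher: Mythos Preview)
Your argument is correct and takes a genuinely different route from the paper's.

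The paper proceeds constructively: it splits $[0,T]$ into $[0,\delta]$ and $[\delta,T]$ with $\delta=\delta(\epsilon)\to 0^+$, bounds the remainder $R=u-u_0$ directly on $[0,\delta]$, and on $[\delta,T]$ introduces an auxiliary solution $w_\delta$ of the unperturbed equation matching $u$ at $s=\delta$. The triangle inequality then reduces to $|u-w_\delta|$ (handled by a Gronwall estimate, available because on $[\delta,T]$ the solutions stay bounded away from zero so the square root is Lipschitz there) and $|w_\delta-u_0|$ (handled via phase--plane arguments and the uniqueness result from the authors' earlier work). The final choice $\delta(\epsilon)=-3T^2/\log\epsilon$ ties the two pieces together.

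You instead extract $\epsilon$-uniform bounds on $u$, $u'$, $u''$ from the energy identity, apply Arzel\`a--Ascoli, pass to the limit in the twice-integrated equation, and identify the limit by a uniqueness argument for \eqref{eq:UndampedU}. This is cleaner and avoids the interval splitting and the auxiliary $w_\delta$, but it is purely qualitative: it yields no information about the rate in \eqref{TheoremResult}, whereas the paper's decomposition is what underlies their (numerically supported) claim of an $\mathcal{O}(-1/\log\epsilon)$ rate. Both proofs ultimately rest on uniqueness of the limit problem; the paper cites this from \cite{Plo18}, while you sketch it via the energy level set and the observation that $(u^\ast)''\neq 0$ at every turning point. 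Your uniqueness sketch is essentially right, though the step ``integrable singularity at $u^\ast=0$ $\Rightarrow$ unique profile'' deserves one more line: the separable first-order equation $u'=\sqrt{2u-\tfrac{4\sqrt{2}}{3}u^{3/2}}$ admits the spurious solution $u\equiv 0$, and it is precisely the second-order information $(u^\ast)''(0)=1$ that rules it out. Equivalently, the substitution $w=\sqrt{u^\ast}$ reduces the increasing arc to $w'=\tfrac{1}{\sqrt{2}}\sqrt{1-\tfrac{2\sqrt{2}}{3}w}$, $w(0)=0$, which is Lipschitz near $w=0$ and hence uniquely solvable.
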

\begin{proof}
	Let us consider the equation \eqref{eq:GovFinal} on the interval $[0, T]$, where $T$ is any fixed value greater than zero. Write 
	\begin{equation}\label{eq:Exact}
	u(s)=u_0(s)+R(s),
	\end{equation}
	where function $u_0$ is defined as in \eqref{eq:ZeroOrder} and function $R(s)$ is a unknown remainder of a zeroth order approximation to the exact solution of \eqref{eq:GovFinal}. We will find an upper bound on $R(s)$. We substitute \eqref{eq:Exact} into \eqref{eq:GovFinal} and obtain following equation
	\begin{equation}\label{eq:RemainderDiff}
	R''(s)=\sqrt{2 u_0(s)}-\sqrt{2(u_0(s)+R(s))}.
	\end{equation}
	We can easily transform above to the form of integral equation
	\begin{equation}\label{eq:Remain}
	R(s)=\int_{0}^{s}G(s-x)f(u_0(x),R(x))\,d x
	\end{equation} 
	where 
	\begin{flalign*}
	G(z)& = \frac{1}{\epsilon}\biggl(1-e^{-\epsilon z}\biggr),\\
	f(u_0(s),R(s))& =\sqrt{2 u_0(s)}-\sqrt{2(u_0(s)+R(s))}-\epsilon u_0'(s).
	\end{flalign*} 
	Thereafter we divide the interval $[0,T]$ into $[0,\delta]$ and $[\delta, T]$. We assume that the number $\delta$ is small. Its exact value will be found later.
	
	Firstly, let us consider the interval $[0,\delta]$. We can estimate the
	integral in \eqref{eq:Remain} from the above as follows
	\begin{equation}
	|R(s)|\le \int_{0}^{\delta}|G(s-x)||f(u_0(x),R(x))|\,d x\le \delta \int_{0}^{\delta}|f(u_0(x),R(x))|\,d x.
	\end{equation}
	It is a matter of simple calculus exercise to see that $u_0$ is bounded by $1$. Moreover, from \cite{Plo18} we know that the exact solution of \eqref{eq:GovFinal} is no greater than $9/8$, hence the function $f$ can be easily bounded from above by $3/2+\epsilon$. Whereas, the function $G(z)$ is concave which together with $G(s-x)\le G(\delta)$ gives $G(s-x)\le \delta$. Therefore, 
 \begin{equation}\label{ineq:IntervalSmall}
 |R(s)|\le \biggl(\frac{3}{2}+\epsilon\biggr)\delta^2, \quad s\in[0,\delta].
 \end{equation}
 Next, let us consider subinterval $[\delta, T]$. To this end, define a new function $w_\delta$ which satisfies the following initial value problem
 \begin{equation}\label{eq:FunctionW}
 w_\delta''+\sqrt{2 w_\delta}=1,\quad w_\delta(\delta)=u_\delta,\ w_\delta'(\delta)=u'_\delta,
 \end{equation} 
 where $u_\delta = u(\delta),\ u'_\delta=u'(\delta)$ and $u$ is a solution of \eqref{eq:GovFinal}. Then we can estimate an error on the interval $[\delta,T]$ as follows
 \begin{equation}\label{ineq:IntervalLarge}
 |R(s)|=|u(s)-u_0(s)|\le |u(s)-w_\delta(s)|+|u_0(s)-w_\delta(s)|=|R_1|+|R_2|.
 \end{equation}
Now, we will analyze the error function $R_1$ and $R_2$ separately. First, let us insert $u=R_1+w_\delta$ into eq. \eqref{eq:GovFinal} to get
\begin{equation}
    R_1''+w_\delta''+\epsilon (R_1+w_\delta)+\sqrt{2(R_1+w_\delta)}=1,
\end{equation}
where $R_1$ is an unknown function. Next, we are using \eqref{eq:FunctionW} to introduce above equation in a more familiar form 
\begin{equation}\label{eq:ErrorFunct1}
    R_1''+\epsilon R_1'=1-\sqrt{2(R_1+w_\delta)}-\sqrt{2 w_\delta}-\epsilon w'_\delta,
\end{equation}
with the initial conditions $R_1(\delta)=u(\delta)-u_\delta=0$ and $R_1'(\delta)=u'(\delta)-u'_(\delta)=0$.
Subsequently, we integrate \eqref{eq:ErrorFunct1} twice and use a initial conditions of function $R_1$ to obtain
\begin{equation}\label{eq:R1Equation}
    |R_1|=\biggl|\int\limits_\delta^s G(s-x)f(w_\delta(x), R_1(x))\, d x\biggr|,
\end{equation}
where function $G(s-x)$ and $f(w_\delta, R_1)$ are defined like in eq. \eqref{eq:Remain}.  If we already have the integral equation, we can proceed to the further analysis of the eq. \eqref{eq:R1Equation} to get a appropriate integral inequality
 \begin{equation}\label{eq:R1Est}
 \begin{split}
 |R_1(s)|&\le \int_{\delta}^{s}|G(s-x)||f(w_\delta(x),R_1(x))|\,d x\le T \int_{\delta}^{s}|f(w_\delta(x),R_1(x))|\,d x\\
 &\le T \int_{\delta}^{s}(|\sqrt{2 w_\delta}-\sqrt{2(w_\delta+R_1)}|+\epsilon|w'_\delta|)\, d x\le T \int_{\delta}^{s}\frac{|R_1|}{\sqrt{2 w_\delta}+\sqrt{2(w_\delta+R_1)}}+\epsilon \, dx\\
 & \le  T \int_{\delta}^{s}\biggl(\frac{|R_1|}{\sqrt{2 u(\delta)}}+\epsilon\biggr) \, dx
 \end{split}
 \end{equation}

 Notice that for $s\in[\delta, T]$ we have $u(s)>0$. Using \eqref{ineq:SmallValues} we have
 \begin{equation}
 \frac{1}{3} \delta\le\sqrt{2 u(\delta)},
 \end{equation}
 and, therefore, we can estimate the denominator in (\ref{eq:R1Est}). Now, we define a new function
 \begin{equation}\label{eq:FunctionS}
 S(s):=T \int_{\delta}^{s}\biggl(\frac{3 |R_1|}{\delta}+\epsilon\biggr) \, dx.
 \end{equation}
 Then, of course
 \begin{equation}\label{eq:Inequality}
 |R_1(s)|\le S(s).
 \end{equation}
 Differentiating \eqref{eq:FunctionS} with respect to $s$ leads to
 \begin{equation}
 \frac{d}{d s}S(s)=T\biggl(\frac{3 |R_1|}{\delta}+\epsilon\biggr) \le T\biggl(\frac{3 S}{\delta}+\epsilon\biggr)
 \end{equation}
 Multiplying by $\exp{-(3 T s/\delta )} $ and integrating implies that
 	\begin{equation}
 	S(s)\le -\epsilon\frac{\delta}{3 T} \biggl(e^{-\frac{3 T}{\delta}s}-e^{-3 T}\biggr)e^{\frac{3 T}{\delta}s}\le \epsilon\frac{\delta}{3 T} \biggl(e^{-3 T+\frac{3 T}{\delta}s}-1\biggr)\le \epsilon\frac{\delta}{3 T} e^{-3 T+\frac{3 T^2}{\delta}},
 	\end{equation}
 where we have used the fact that $S(\delta)=0$. Now, if we choose $\delta(\epsilon):=-3 T^2/\log{\epsilon}$ we see that $\delta(\epsilon)\to 0$ and $\max_{s\in[\delta,T]}|S(s)|\to 0$ as $\epsilon\to 0$. Hence, the remainder $R_1$ vanishes. 
 	
 Now, we take a look at the second component on the right hand side in \eqref{ineq:IntervalLarge}, namely the remainder denoted by $R_2$. Let us notice that the functions $w_\delta$ and $u_0$ satisfy the same nonlinear differential equation but with different initial conditions at point $s=\delta$. Using the results concerning function $u_0$ presented at beginning of the section we obtain that $w_\delta$ is also a periodic function. Multiplying \eqref{eq:FunctionW} by $w_\delta'$ and integrating we get
 	\begin{equation}
 	   \frac{w_\delta'(s) ^2}{2}+\frac{2\sqrt{2}}{3}w_\delta(s)^{\frac{3}{2}}-w_\delta(s)-\frac{{u'}_{\delta} ^{2}}{2}-\frac{2\sqrt{2}}{3}u_\delta ^{\frac{3}{2}}+u_\delta=0,
 	\end{equation}
 	for $s\in [\delta, T]$. Using above equation we can show that the function $w_\delta$ is bounded. Specifically, following inequalities are satisfied
 	\begin{equation}
 	    \min_{s\in [\delta, T]}{\{u_0(s)\}}=0\le w_{\delta,min}\le w_\delta(s)\le w_{\delta, max}\le \frac{9}{8}=\max_{s\in [\delta, T]}{\{u_0(s)\}},
 	\end{equation}
 	for all $s\in [\delta, T]$ and all $\delta\ge 0$. Next, performing some technical and tedious calculations it can be shown that $\lim _{\delta\to 0^+} w_{\delta, min}=0$ and $\lim _{\delta\to 0^+} w_{\delta, max}=9/8$. We expect that the function $w_\delta$ converges to $u_0$ as $\delta\to 0^+$. In our previous work \cite{Plo18} we proved that \eqref{eq:GovFinal} posses a unique solution with zero initial conditions and with initial conditions of the form: $u(0)=\beta^2/2$ and $u'(0)=0$, where $\beta$ is some arbitrary number. The uniqueness of the solution can be easily extended to all initial conditions belonging to the set $\Omega:=\{(x_0, y_0): y_0^2/2+2\sqrt{2}/3 x_0^{3/2}-x_0\le 0\}$. Furthermore, the uniqueness theorem is true regardless of the value of the parameter $\epsilon$. Therefore, setting $\epsilon=0$ in \eqref{eq:GovFinal} implies that the solution of
 	\begin{equation}\label{eq:WithoutD}
 	        u''+\sqrt{2 u}=1,
 	\end{equation}
 	exists and is unique for all initial conditions from the set $\Omega$. Notice that the initial conditions of function $w_\delta$ belongs to the set $\Omega$. Setting $x:=u$ and $y:=u'$ we can transform eq. \eqref{eq:WithoutD} into a system of differential equations
 	\begin{equation}\label{System}
 	    \left\{\begin{array}{cl}
 	       x'  =& y , \\
 	        y' =& 1-\sqrt{2 x} .
 	    \end{array}
 	    \right.
 	\end{equation}
 	The uniqueness of the solution of \eqref{eq:WithoutD} for all initial condition belonging to the set $\Omega$ implies that all trajectories of the system \eqref{System} located in the set $\Omega$ are uniquely determined. Let $\Gamma_{w_\delta} :=\{(w_\delta(s),w'_\delta(s)),\ \delta\le s\le T\} $ and $\Gamma_{u_0}:=\{(u_0(t),u'_0(t)),\ \delta\le s\le T\} $ denote the trajectories associated with $w_\delta$ and $u_0$ respectively. After a detailed analysis of $w_\delta$ and $u_0$, we get  $\Gamma_{u_0}\in \partial \Omega$ and $\Gamma_{w_\delta}\in\Omega$ for all $\delta \ge 0$. If a value of parameter $\delta$ will decrease to zero we anticipate that the elements of $\Gamma_{w_\delta}$ will be approaching the proper elements in $\Gamma_{u_0}$. Using the fact that the minimal and maximal value of $w_\delta$ converge to $0$ and $9/8$ respectively we get

 	\begin{align}
 	    \lim\limits_{\delta\to 0^+}(w_{\delta,\min},0)& = (0,0)=(u_{0,\min},0),\\
 	    \lim\limits_{\delta\to 0^+}(w_{\delta,\max},0)& = \biggl(\frac{9}{8},0\biggr)=(u_{0,\max},0).
 	\end{align}
 	Now, applying a non-intersection property for uniquely defined phase plane of system \eqref{System} we get $\lim_{\delta\to 0^+}\Gamma_{w_\delta} =\Gamma_{u_0}$. Therefore, combining that with the fact that the difference between function $w_\delta$ and $u_0$ at initial point converge to zero as $\delta\to 0^+$ we get $\lim_{\delta\to 0^+}\max_{s\in [\delta, T]}||(w_\delta(s),w'_\delta(s))-(u_0(s),u'_0(s))||=0$ what immediately  implies $\lim_{\delta\to 0^+}|R_2|=\lim_{\delta\to 0^+}|w_\delta -u_0|=0$. Here the $||\cdot||$ is a Euclidean norm in $\mathbb{R}^2$

 	Finally, we take togather the inegualities \eqref{ineq:IntervalSmall} and \eqref{ineq:IntervalLarge} concerning the bound for remainder on the intervals $[0,\, \delta]$ and $[\delta,\, T]$ respectively and using $\lim_{\delta\to 0^+}|w_\delta -u_0|=0$  we conclude the final convergence result \eqref{TheoremResult}.
 	
\end{proof}

The exact order of convergence in Theorem \ref{thm:Error} is difficult to obtain because of the appearance of the non-Lipschitz function in the main equation. However, the numerical analysis shows that the error rate between $w_\delta$ and $u_0$ should not be worse than $\delta$ as $\delta\to 0^+$. In the following remark we used the MATHEMATICA software to find a numerical solution to eq. \eqref{eq:UndampedU} with the non zero initial conditions. 
\begin{rem}
The numerical analysis revealed that the following convergence error holds
\begin{equation}\label{eq:ConvergenceError}
    \max\limits_{s\in[0,T]}|u(s)-u_0(s)|=\mathcal{O}\biggl(-\frac{1}{\log{\epsilon}}\biggr)\quad \textrm{as}\quad \epsilon\to 0^+,
\end{equation}
where $u(s)$ is an exact solution of \eqref{eq:GovFinal}, function $u_0$ is defined as before and $T$ is an any fixed constant greater than zero.
\end{rem}
From the proof of Theorem \ref{thm:Error} we have that on the interval $[0,\delta]$ the error function between $u(s)$ and $u_0$ can be bounded from above by
\begin{equation}
    |R(s)|\le \biggl(\frac{3}{2}+\epsilon\biggr)\delta^2.
\end{equation}
Whereas on the interval $[\delta, T]$ we get
\begin{equation}
    |R(s)|\le \frac{\delta}{3 T} e^{-3 T}+\max\limits_{s\in [\delta, T]}|w_\delta(s)-u_0(s)|.
\end{equation}
We numerically compute the values of the second term in above inequality  for various values of $\delta$. The result are presented on Fig. \ref{fig:NumericalError}. In calculation we use the asymptotic results from Lemma \ref{Lemma1}, specifically we use following initial conditions for function $w_\delta(s)$ on the interval $[\delta, T]$
\begin{equation}
w_\delta(\delta)=D_1 \delta^2,\quad w_\delta(\delta)=D_2\delta,
\end{equation}
where $D_1$ and $D_2$ are some positive constants.
The numerical results showed that the error between function $w_\delta$ and $u_0$ might of order of $\delta$. It is worth to emphasize that for some choice of initial conditions $w_\delta$ the quadratic order of convergence is observed. 
\begin{figure}
	\centering
	\includegraphics[scale=0.7]{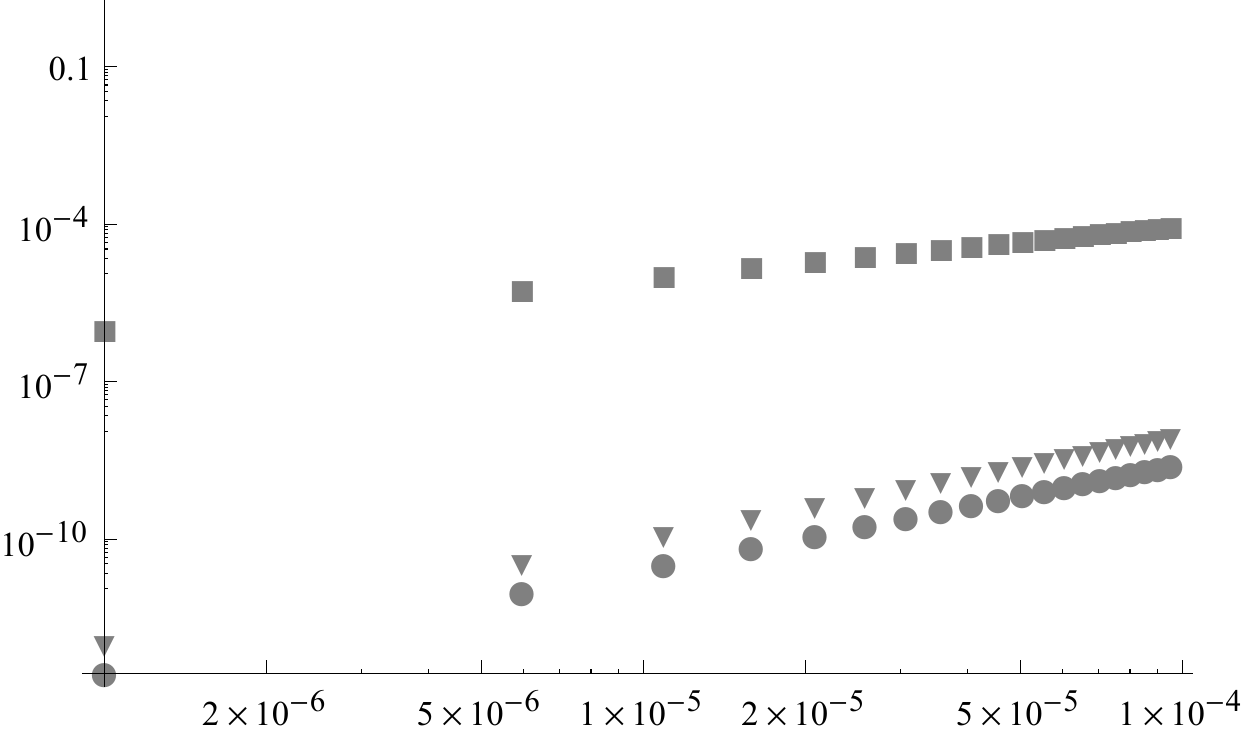}
	\includegraphics[scale=0.7]{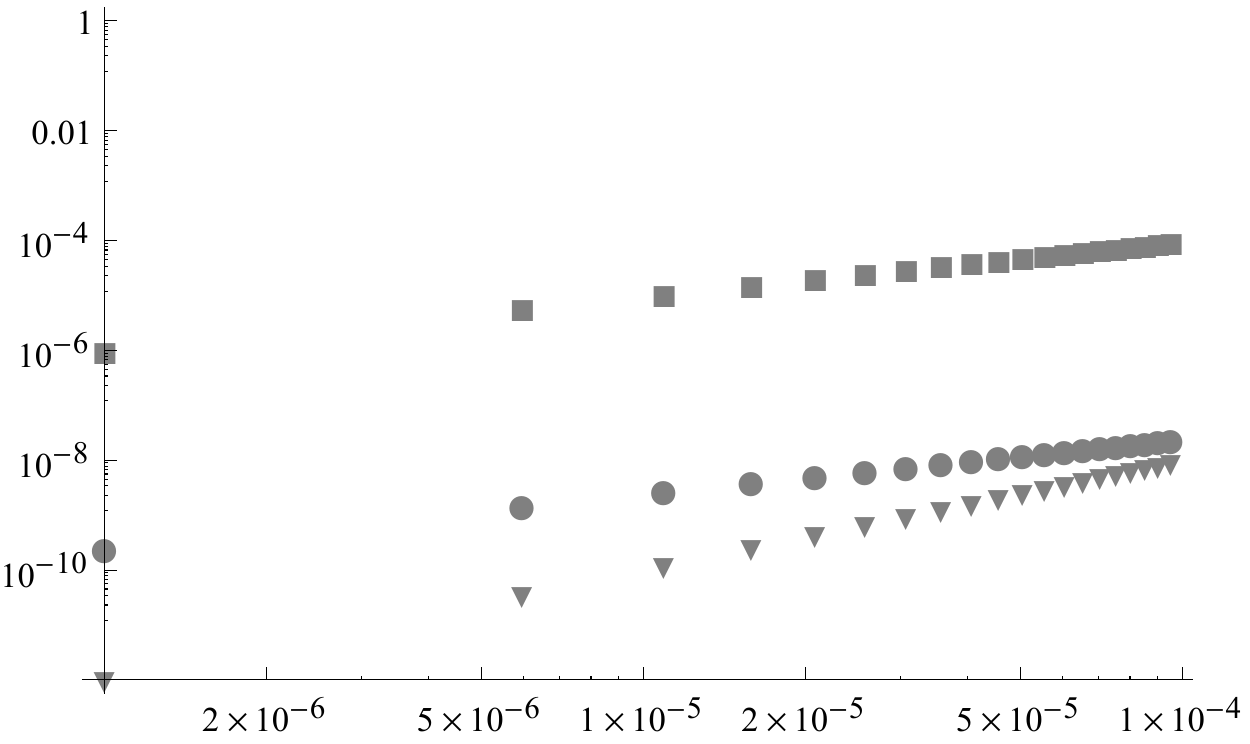}
	\caption{The numerical error between function $w_\delta$ and $u_0$ ($\max_{s\in[\delta, T]}|w_\delta-u_0|,\greycircle$) for various values of $\delta$ and with different sets of initial conditions: $w_\delta(\delta)=1/2 \,\delta^2$, $w'_\delta(\delta)=\delta$ (left) and $w_\delta(\delta)=7/18 \,\delta^2$, $w'_\delta(\delta)=1/18\,\delta$ (right). The functions $g_1(\delta)=\delta\ (\greysquare)$ and $g_2(\delta)=\delta^2\ (\greytriangle)$ were marked on the graphs for comparison.}
	\label{fig:NumericalError}
\end{figure}
Combing numerical simulations with the above results we conclude that \eqref{eq:ConvergenceError} holds.

\section{Asymptotic behavior for large times}\label{SectionAsym}

In this section we consider a large time behaviour of the solution of \eqref{eq:GovFinal} in the oscillatory regime. To facilitate the analysis it is useful to introduce an additional function $\phi$ that will transform \eqref{eq:GovFinal} into a simpler form. We set
\begin{equation}\label{DefLiouvilleGreen}
    v(s):=(u(s)-1/2)\phi(s)^{-1},
\end{equation}
where the exact formula for $\phi(s)$ will be found later. Let us notice that \eqref{DefLiouvilleGreen} is simply the Liouville transformation. It is easy to see that $u(s)=1/2+\phi(s) v(s)$ and after substituting the above into \eqref{eq:GovFinal} we obtain
\begin{equation}
    \phi v''+(2 \phi' +\epsilon \phi) v'+(\phi''+\epsilon \phi')v+\sqrt{1+2 \phi v}=1.
\end{equation}
We can see that in order to eliminate the first derivative we have to take
\begin{equation}
    \phi(s):=e^{\epsilon s/2}
\end{equation}
Now we can rewrite \eqref{eq:GovFinal} as
\begin{equation}\label{eq:Undamped}
    v''+\left(\frac{2}{1+\sqrt{1+2 \phi v}}-\frac{\epsilon^2}{4}\right)v=0,\quad v(0)=-\frac{1}{2},\ v'(0)=\frac{\epsilon}{4}.
\end{equation}
We can think about the above equation as one describing nonlinear oscillations in which the frequency is modulated with the amplitude via the term in the parentheses. 

We will use integral equations techniques to investigate the large time regime. Let us consider a time interval $[T, T^*]$, where $T$ and $T^*$ are fixed numbers. With a standard use of the Green's function we can transform \eqref{eq:Undamped} into the integral form
\begin{equation}\label{eq:IntegralForm}
    v(s)=A_0\cos{\tau s}+ B_0\sin{\tau s}+\int\limits_{T}^{s}G(s-t) f(\phi(t),v(t))\, d t=:L v,
\end{equation}
where
\begin{flalign*}
    G(z)& = \frac{1}{\tau}\sin{\tau z},\qquad \tau=\sqrt{1-\frac{\epsilon^2}{4}},\\
    f(\phi(t),v(t)) & = \biggl(1-\frac{2}{1+\sqrt{1+2 \phi(t) v(t)}}\biggr)v(t),
\end{flalign*} 
and 
\begin{align}
    A_0 = &\, \frac{v'(T
    ) \cos (T \tau )}{\tau }+v(T) \sin (T \tau ),\\
    B_0 = &\, v(T) \cos (T \tau )-\frac{v'(T) \sin (T \tau )}{\tau },
\end{align}
The initial values $v_T:=v(T),\ v'_T:=v'(T)$ are well defined because of our preceding results stating that \eqref{eq:GovFinal} has a unique global solution (see \cite{Plo18}). 

Immediately, we can make some initial observations about the function $v$.
\begin{cor}\label{cor:Bound}
	Let $v$ be a solution of \eqref{eq:Undamped} and assume that $\epsilon > 0$. Then, for sufficiently large $s>0$ we have
	\begin{equation}
	-\frac{1}{2}<C_1\le \phi(s)v(s)\le C_2< \frac{1}{2},
	\end{equation}
	where $C_{1,2}$ are some fixed constants. 
\end{cor}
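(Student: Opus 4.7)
The corollary is really a statement about $u$ alone, because the Liouville substitution \eqref{DefLiouvilleGreen} gives identically $\phi(s)v(s)=u(s)-\frac{1}{2}$. Hence the claim is equivalent to showing that $u(s)$ is eventually trapped in a compact subinterval of $(0,1)$. I would prove the stronger statement $u(s)\to \frac12$ as $s\to\infty$ and then pick $C_1,C_2$ as any pair satisfying $-\frac12<C_1<0<C_2<\frac12$.

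\textbf{Energy bounds and a Barbalat-type step.} I would rewrite the energy identity \eqref{eq:Energy} as $E(s):=\frac{1}{2}u'(s)^2+V(u(s))=-\epsilon\int_0^s u'(x)^2\,dx$, with $V(u):=\frac{2\sqrt{2}}{3}u^{3/2}-u$. A direct calculation gives $V'(u)=\sqrt{2u}-1$, so $V$ attains its unique minimum $V(\frac12)=-\frac16$. Therefore $E$ is non-increasing, bounded below by $-\frac16$, and converges to some $E_\infty\in[-\frac16,0)$; the strict inequality $E_\infty<0$ holds because $u''(0)=1$ forbids $u'\equiv 0$, so $E(s_0)<0$ already at any $s_0>0$. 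Two consequences follow: first, $\int_0^\infty u'(x)^2\,dx=-E_\infty/\epsilon<\infty$; second, since the energy bound yields global boundedness of $u$ and $u'$, the ODE makes $u''$ bounded and hence $u'$ uniformly continuous. Barbalat's lemma then gives $u'(s)\to 0$, whence $V(u(s))\to E_\infty$.

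\textbf{LaSalle step — the main obstacle.} The delicate point is to rule out $E_\infty>-\frac16$, i.e., a persistent residual oscillation at non-decaying amplitude, which the $L^2$-integrability and pointwise decay of $u'$ do not exclude on their own. I would invoke the LaSalle invariance principle applied to \eqref{eq:GovFinal}: the $\omega$-limit set $\Omega_\infty$ of the phase trajectory $(u,u')$ is nonempty, compact and invariant under the flow, and by the preceding step is contained in $\{u'=0\}$. But an entire orbit lying in $\{u'=0\}$ must satisfy $u''=1-\sqrt{2u}\equiv 0$, forcing $u\equiv\frac12$. Hence $\Omega_\infty=\{(\tfrac12,0)\}$ and $(u(s),u'(s))\to(\tfrac12,0)$. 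Choosing $c\in(0,\tfrac12)$ arbitrarily (e.g.\ $c=\tfrac14$), one has $|u(s)-\tfrac12|\le c$ for all sufficiently large $s$, which delivers the corollary with $C_1:=-c$ and $C_2:=c$. A minor remaining point is that LaSalle requires global existence of the trajectory on the admissible half-plane $u\ge 0$, but this is already guaranteed by the existence and uniqueness theory from \cite{Plo18}.
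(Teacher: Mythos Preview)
Your argument is correct. Both your proof and the paper's hinge on the same observation: $\phi(s)v(s)=u(s)-\tfrac12$, so the corollary reduces to the asymptotic statement $u(s)\to\tfrac12$. The difference is in how that convergence is obtained.

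The paper's proof is essentially a citation. It first gives a short direct argument that $u(s)>0$ for all $s>0$ (if $u(s^*)=0$, the energy identity \eqref{eq:Energy} would have nonnegative left side and strictly negative right side), and then invokes the authors' prior paper \cite{Plo18}, where it was already shown that the solution oscillates about and converges to $\tfrac12$. With that in hand, the choice of $T$ and $C_{1},C_{2}$ is immediate.

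Your proof is self-contained: you reprove the convergence $u(s)\to\tfrac12$ from scratch via the dissipation identity, Barbalat's lemma, and LaSalle's invariance principle. This is more work but has the advantage of not depending on the external reference, and it makes explicit why the damping $\epsilon>0$ is essential. One small wording point: to apply Barbalat you want $u'^{2}$ (not $u'$) uniformly continuous, which follows from the boundedness of $u'$ and $u''$ that you already noted; the conclusion $u'\to0$ is then the same. Also, the invariance of the $\omega$-limit set requires uniqueness through its points; since your energy argument already confines the limit set to a region with $u$ bounded away from $0$, the non-Lipschitz singularity at $u=0$ is harmless there, and the citation to \cite{Plo18} for global existence suffices for the trajectory itself.
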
  
\begin{proof}
From our previous result \cite{Plo18} we know that $0\le u\le 9/8$ where $u$ is a solution of \eqref{eq:GovFinal}. We will show that the there exist such $T$ that the inequality $0<1/2+C_1\le u$ is satisfied on the interval $[T,\infty)$. To do this let rewrite the energy form of the governing equation \eqref{eq:GovFinal}
	\begin{equation}
	\frac{u'^2}{2}+\frac{2\sqrt{2}}{3}u^{\frac{3}{2}}-u=-\epsilon \int\limits_{0}^{s}u'^2\,d t.
	\end{equation}
	Let assume that $u$ reaches zero at some point $s^*>0$. Then, the above immediately leads to a contradiction since the right-hand side is strictly negative. Hence the only point in which function $u$ can reach zero is $s=0$.
	
	Moreover, from our previous work \cite{Plo18} we know that the solution of \eqref{eq:GovFinal} oscillates around $1/2$ and approaches it as $s\rightarrow\infty$. Hence, we are able to choose such $T_1$ that the inequality $0<1/2+C_1\le u$ is satisfied for $s\in[T_1,\infty)$. Using the transformation (\ref{DefLiouvilleGreen}) yields the estimate from below. Similarly, on the grounds of the asymptotic convergence of function $\phi v$ there must exist $T_2$ such that the inequality $\phi v\le C_2<1/2$ is satisfied for $s\in[T_2,\infty)$. Finally, we put $T:=\max\{T_1,T_2\}$.
\end{proof}
Now, we are able to introduce result concerning existence and uniqueness of solutions to (\ref{eq:IntegralForm}). Later, we will use it to construct several large time approximations.
\begin{thm}
Solution of \eqref{eq:IntegralForm} exists and is uniquely defined on $[T,\infty)$, where $T$ is a fixed number defined as in the proof of Corollary \eqref{cor:Bound} .

\end{thm}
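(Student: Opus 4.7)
The plan is to reduce the assertion to the global existence and uniqueness theory for \eqref{eq:GovFinal} already established in \cite{Plo18}, by passing through the equivalence between the integral equation \eqref{eq:IntegralForm}, the second-order ODE \eqref{eq:Undamped}, and the governing equation \eqref{eq:GovFinal} under the Liouville change of variable \eqref{DefLiouvilleGreen}.

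First I would verify this equivalence. For any continuous $v$ on $[T,\infty)$ taking values in the range supplied by Corollary \ref{cor:Bound} (so that $\phi v \in [C_1, C_2]$), the integrand $G(s-t)f(\phi(t),v(t))$ is continuous and bounded, hence a solution of \eqref{eq:IntegralForm} is automatically $C^2$. Using $G(0)=0$, $G'(0)=1$, and $G'' + \tau^2 G = 0$, differentiating \eqref{eq:IntegralForm} twice reproduces \eqref{eq:Undamped} together with Cauchy data $v(T) = v_T$, $v'(T) = v'_T$; conversely, variation of parameters for the harmonic-oscillator part of \eqref{eq:Undamped} recovers \eqref{eq:IntegralForm}, so the two formulations are equivalent.

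Second, I would invoke the pointwise bijection $u(s) = 1/2 + \phi(s) v(s)$ between $C^2$ solutions of \eqref{eq:Undamped} and those of \eqref{eq:GovFinal}. The Cauchy data $(v_T, v'_T)$ transport to initial data for $u$ at $s=T$ which, by Corollary \ref{cor:Bound}, lie comfortably inside the admissibility set $\Omega$ introduced in the proof of Theorem \ref{thm:Error}. Since uniqueness for \eqref{eq:GovFinal} has been extended to all data in $\Omega$ and existence is global (cf.\ \cite{Plo18}), we obtain a unique $u$ on $[T,\infty)$, and transporting back via $v = (u-1/2)/\phi$ yields existence and uniqueness of $v$ there.

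The main obstacle I anticipate is confirming that a candidate solution remains in the region $\phi v \in (-1/2, 1/2)$, where $f$ is real analytic and $\partial_v f$ is uniformly bounded; this is precisely the role of Corollary \ref{cor:Bound}, which must be invoked before any Gronwall- or contraction-type estimate. A fully self-contained alternative, bypassing the detour through \eqref{eq:GovFinal}, is to apply the Banach fixed-point theorem to $L$ directly in the Bielecki norm $\|v\|_\lambda := \sup_{s \ge T} e^{-\lambda(s-T)} |v(s)|$ on the closed subset of continuous functions with $\phi v \in [C_1, C_2]$. A short computation with the substitution $w = \phi v$ shows that the Lipschitz constant $L_f$ of $f(\phi, \cdot)$ on this subset is in fact independent of $\phi$, and then any choice $\lambda > L_f/\tau$ renders $L$ a strict contraction, delivering both existence and uniqueness in one stroke.
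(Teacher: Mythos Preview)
Your primary route---reducing \eqref{eq:IntegralForm} to the second-order ODE \eqref{eq:Undamped} and then, via the Liouville substitution $u=1/2+\phi v$, to the governing equation \eqref{eq:GovFinal}, for which global existence and uniqueness are already secured in \cite{Plo18}---is correct and is \emph{not} the argument the paper gives. The paper proceeds by a direct fixed-point argument: it equips $C([T,T^*])$ with the Bielecki norm $\|u\|_B=\sup_{[T,T^*]}e^{-\alpha s}|u(s)|$, shows that $f(\phi,\cdot)$ is Lipschitz with constant $1+(1+2C_1)^{-1/2}$ (this is where Corollary~\ref{cor:Bound} enters), bounds $|G|\le 1/\tau$, and concludes that $L$ is a contraction once $\alpha$ is taken large; arbitrariness of $T^*$ then yields $[T,\infty)$. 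Your reduction is more economical because it recycles the hard work already done for \eqref{eq:GovFinal}; the paper's argument, by contrast, is self-contained at the level of the integral operator and makes the weighted-norm machinery explicit---which is not wasted effort, since the identical Bielecki contraction is reused in the proof of the next theorem for the Taylor-expanded operator $P$.

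Your ``fully self-contained alternative'' is essentially the paper's method transplanted from $[T,T^*]$ to $[T,\infty)$. One point deserves a caveat in both versions: the Lipschitz bound on $f$ is only valid on the region $\phi v\ge C_1>-1/2$, so the Banach fixed-point theorem requires $L$ to map the closed set $\{v:\phi v\in[C_1,C_2]\}$ into itself, and neither you nor the paper checks this invariance. On a bounded interval one can patch it by first obtaining a local solution (shrinking $T^*-T$) and then continuing via the a~priori bound of Corollary~\ref{cor:Bound}; on $[T,\infty)$ the cleanest way to close the gap is precisely your first approach, which supplies the needed a~priori confinement through the energy identity for $u$.
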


\begin{proof}
	Let us consider the existence problem of the solution of \eqref{eq:IntegralForm} on the interval $[T,T^*]$, where $T^*$ is an arbitrary fixed number greater than $T$. Moreover we are looking for the solution of \eqref{eq:IntegralForm} in the Banach space $(C([T,T^*]),\left\|\cdot\right\|_\infty$). Further, introduce the Bielecki's norm defined as follows \cite{bielecki56}
	\begin{equation}\label{BieleckiNorm}
	\left\|u\right\|_B:=\sup\limits_{s\in[T,T^*]}|e^{-\alpha s}u(s)|,\quad\alpha>0,
	\end{equation}
	where $\alpha$ is a certain fixed number. Because of the the fact that Bielecki's norm is equivalent to the usual supremum norm on $[T,T^*]$ we can use it in the proof that the operator $A$ is a contraction with respect to the supremum norm.
	
	For any continuous $u$ and $v$ we have
	\begin{equation}\label{ineq:FunctionsDifference}
	\begin{split}
	|f(\phi(t), u(t)) -f(\phi(t),v(t) )|\le & \biggl(1+\frac{2}{\sqrt{1+2 \phi u}+\sqrt{1+2 \phi v}}\biggr)|u-v| \\
	\le & \biggl(1+\frac{1}{\sqrt{1+2 C_1}}\biggr)|u-v|,
	\end{split}
	\end{equation}
	where the last inequality is a consequence of Corollary \ref{cor:Bound} with appropriate chosen value of $T$. Finally, using \eqref{eq:IntegralForm} we can make the estimate
	\begin{equation}
	\begin{split}
	|L(u)(s)-L(v)(s)|\le &\int\limits_{T}^{s}|G(s-t)| |f(\phi(t),u(t))-f(\phi(t),v(t))|\, d t\le \frac{1}{\tau}\int\limits_{T}^{s}|f(\phi(t),u(t))-f(\phi(t),v(t))|\, d t\\
	\le & \frac{1}{\tau}\int\limits_{T}^{s}e^{\alpha t}e^{-\alpha t}|f(\phi(t),u(t))-f(\phi(t),v(t))|\, d t\le \frac{e^{\alpha s}-e^{\alpha T}}{\tau \alpha} \left\|f(\phi,u)-f(\phi,v)\right\|_B . 
	\end{split}
	\end{equation} 
	Multiplying the above by $\exp(-\alpha s)$, taking supremum and using \eqref{ineq:FunctionsDifference} leads to
	\begin{equation}
	\left\|L(u)-L(v)\right\|_B\le \frac{1}{\tau \alpha} \left(1+\frac{1}{\sqrt{1+2 C_1}}\right)\left\|u-v\right\|_B.
	\end{equation}
	Choosing a sufficiently large $\alpha$ we can make the prefactor on the right-hand side of the above to be smaller than $1$ which shows that the operator $L$ is a contraction in the Bielecki's norm. The equivalence of the supremum and Bielecki's norms implies that $L$ is also a contraction in the supremum norm. Banach Contraction Mapping Theorem used on the space $(C([T,T^*]),\left\|\cdot\right\|_\infty)$ asserts that \eqref{eq:IntegralForm} posses a unique fixed point. From the arbitrariness of $T^*$ we conclude that the solution can be continued to the unbounded domain $[T,\infty)$.
\end{proof}

The above result suggests that we may be able to explicitly construct the exact solution of the problem (\ref{eq:IntegralForm}) as a limit of iterative sequence. This can be utilized in finding the leading order asymptotic behaviour for large times and provide some useful approximations.

To this end, let us notice that the function $f$ in \eqref{eq:Undamped} can be expanded in a Taylor series with respect to the product $\phi(s) v(s)$. The function $\phi(s)$ decreases monotonically to zero while the function $v$ is bounded. Therefore, for $s\to\infty$ we have $\phi(s)v(s)\to 0$. Hence, we can rewrite \eqref{eq:IntegralForm} in the following form
\begin{equation}\label{eq:IntegralUndamped}
 v(s)=A_0\cos{\tau s}+B_0\sin{\tau s}-\sum\limits_{m=1}^{\infty}\sigma_m\int_{T}^{s}G(s-t) \phi(t)^m v^{m+1}\, d t=:P v, 
\end{equation}
where $A_0$ and $B_0$ are defined as before and the coefficients are
\begin{equation}
\sigma_m=\binom{\frac{1}{2}}{m+1}2^{m+1}.
\end{equation}
Equation \eqref{eq:IntegralUndamped} can now be used to construct a iteration
\begin{equation}\label{eq:IntegralUndampedIteration}
    v_{n+1}=P v_n,\qquad v_0=A_0 \cos{\tau s}+B_0 \sin{\tau s}.
\end{equation}
\begin{thm}
Let $v^*$ be the unique solution of \eqref{eq:IntegralUndamped}. Then $\{v_n\}$ is convergent to unique $v^*$. Moreover, 
\begin{equation}\label{AsymBehavior}
    v_{n+1}(s)\sim A_n \sin{\tau s}+B_n \cos{\tau s},\quad \textrm{as}\ s\to \infty,
\end{equation}
with
\begin{equation}\label{AsymBehaviorCoeff}
    \begin{split}
        A_{n+1}=& \frac{v'(T
    ) \cos (T \tau )}{\tau }+v(T) \sin (T \tau )-\frac{1}{\tau}\sum\limits_{m=1}^{\infty}\, \sum\limits_{\substack{j,k\\ j+k=m+1}}\binom{m+1}{j}\sigma_m A_{n}^{j}B_{n}^{m+1-j}J_1,\\
        B_{n+1}=& v(T) \cos (T \tau )-\frac{v'(T) \sin (T \tau )}{\tau }+\frac{1}{\tau}\sum\limits_{m=1}^{\infty}\sum\limits_{\substack{j,k\\ j+k=m+1}}\binom{m+1}{j}\sigma_m A_{n}^{j}B_{n}^{m+1-j}J_2,
    \end{split}
\end{equation}
where
\begin{equation}\label{CoefIntegral1}
    \begin{split}
        J_1=& -\frac{1}{2^{j+k+1}}\sum\limits_{l=0}^{j}\sum\limits_{p=0}^{k+1}\binom{j}{l}\binom{k+1}{p}\frac{f(j,l,k,p,m,\tau,\epsilon)}{(\tau(j-2 l+k+1-2 p))^2+\lambda^2},\\
        f(j,l,k,p,m,\tau,\epsilon)=&\left\{\begin{array}{ll}
           (-1)^{r+l+1}(\lambda \cos{\nu T}-\nu \sin{\nu T})e^{-\lambda T},  &\mathrm{for}\, j=2 r,  \\
             (-1)^{r+l}(\nu \sin{\nu T}-\lambda \cos{\nu T})e^{-\lambda T}, & \mathrm{for}\, j=2 r+1,
        \end{array}\right.
    \end{split}
\end{equation}
and
\begin{equation}
    \begin{split}
        J_2=& -\frac{1}{2^{j+k+1}}\sum\limits_{l=0}^{j+1}\sum\limits_{p=0}^{k}\binom{j+1}{l}\binom{k}{p}\frac{g(j,l,k,p,m,\tau,\epsilon)}{(\tau(j-2 l+k+1-2 p))^2+\lambda^2},\\
        g(j,l,k,p,m,\tau,\epsilon)=&\left\{\begin{array}{ll}
           (-1)^{r+l}(\lambda \cos{\nu T}-\nu \sin{\nu T})e^{-\lambda T},  &\mathrm{for}\, j=2 r+1,  \\
             (-1)^{r+1+l}(\nu \cos{\nu T}-\lambda \sin{\nu T})e^{-\lambda T}, & \mathrm{for}\, j=2 r,
        \end{array}\right.
    \end{split}
\end{equation}
and $\nu=\tau (j-2 l+k+1-2 p)$.
\end{thm}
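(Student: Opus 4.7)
The first assertion (convergence of $\{v_n\}$ to the unique solution $v^*$) reduces to the preceding theorem. Indeed, for $s\ge T$ Corollary~\ref{cor:Bound} gives $|\phi(s)v^*(s)|<1/2$, so the Taylor expansion of $1-2/(1+\sqrt{1+2\phi v})$ in the product $\phi v$ used to pass from \eqref{eq:IntegralForm} to \eqref{eq:IntegralUndamped} converges absolutely, and on a closed ball around $v^*$ the operators $P$ and $L$ agree. Since $L$ was already shown to be a contraction in Bielecki's norm on $C([T,T^*])$ for every $T^*>T$, Banach's Fixed Point Theorem gives $v_n\to v^*$ uniformly on every compact subinterval, and the arbitrariness of $T^*$ extends the convergence to $[T,\infty)$.

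For the asymptotic representation \eqref{AsymBehavior} the plan is to induct on $n$. The base $n=0$ is immediate from the definition of $v_0$. For the inductive step I would assume $v_n(s)\sim A_n\sin(\tau s)+B_n\cos(\tau s)$ as $s\to\infty$ and substitute this into \eqref{eq:IntegralUndampedIteration}. Because $\phi(t)^m=e^{-m\epsilon t/2}$ decays exponentially, every integrand is dominated by an integrable function on $[T,\infty)$, so the difference $v_n(t)-[A_n\sin(\tau t)+B_n\cos(\tau t)]$ contributes only an exponentially small perturbation to the improper integral, leaving a well-defined limit whose leading order is obtained by replacing $v_n$ with its leading trigonometric part.

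The core of the argument is then the explicit evaluation of that limit. The strategy is: (i) use the binomial theorem to write $v_n^{m+1}=\sum_{j+k=m+1}\binom{m+1}{j}A_n^j B_n^{m+1-j}\sin^j(\tau t)\cos^k(\tau t)$; (ii) apply power-reduction identities (equivalently, Euler's formula followed by a double binomial expansion) to rewrite each $\sin^j\cos^k$ as a finite double sum of $\sin(\nu t)$ and $\cos(\nu t)$ with $\nu=\tau(j-2l+k+1-2p)$, which explains the summation indices $l,p$ appearing in \eqref{CoefIntegral1}; (iii) split $G(s-t)=\frac{1}{\tau}[\sin(\tau s)\cos(\tau t)-\cos(\tau s)\sin(\tau t)]$ and pull the factors $\sin(\tau s)$, $\cos(\tau s)$ outside the integral; (iv) evaluate the standard primitives $\int e^{-\lambda t}\cos(\nu t)\,dt$ and $\int e^{-\lambda t}\sin(\nu t)\,dt$ with $\lambda=m\epsilon/2$, observe that the upper boundary contribution vanishes as $s\to\infty$, and keep the lower boundary term at $t=T$, whose denominator is exactly $\nu^2+\lambda^2$. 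Collecting coefficients of $\sin(\tau s)$ and $\cos(\tau s)$ then produces the formulas \eqref{AsymBehaviorCoeff} for $A_{n+1}$ and $B_{n+1}$.

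The main obstacle is not a single analytic estimate but the combinatorial bookkeeping of steps (ii) and (iv). The case split $j=2r$ versus $j=2r+1$ in the definitions of $f$ and $g$ reflects the parity of the exponent on $\sin(\tau t)$ in the power-reduction identity (even powers produce cosines, odd powers produce sines), and one must check that after collecting all contributions the resulting coefficient matches the precise form displayed in \eqref{CoefIntegral1}. This part is lengthy but entirely mechanical once the integrals in step (iv) are tabulated.
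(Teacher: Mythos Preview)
Your proposal is correct and follows essentially the same approach as the paper: convergence via a Bielecki-norm contraction argument, and the asymptotic form by induction using the binomial expansion of $v_n^{m+1}$, Euler's formula for $\sin^j(\tau t)\cos^{k+1}(\tau t)$, and explicit evaluation of the resulting integrals $\int_T^s e^{-\lambda t}e^{i\nu t}\,dt$ with the vanishing upper-boundary term. The only minor difference is that the paper re-establishes the contraction property of $P$ from scratch (Mean Value Theorem on $(\phi v)^{m+1}$ and an explicit hypergeometric sum) rather than, as you do, reducing to the already-proved contraction of $L$.
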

\begin{proof}
    Let us prove that the iteration
    \begin{equation}
        v_{n+1}=P v_n,\quad v_0=A_0 \sin{\tau s}+B_0 \cos{\tau s},
    \end{equation}
    converges to the exact solution of \eqref{eq:IntegralForm} on the interval $[T,T^*]$, where $T^*$ is an arbitrary fixed number greater than $T$. Take any two solutions of \eqref{eq:IntegralForm}, call them $u$ and $v$ and calculate
    \begin{equation}\label{ineq:SeriesContraction}
        |P v-P u|\le \sum\limits_{m=1}^{\infty}\sigma_m\int_{T}^{s}|G(s-t)|\phi(t)^{-1}|(\phi v)^{m+1}-(\phi u)^{m+1}|\,d t.
        \end{equation}
        Then, we can apply Lagrange's Mean Value Theorem for the function $(\phi v)^{m+1}$ for all values of $t$. Thus, we have
        \begin{equation}
            \begin{split}
\sum\limits_{m=1}^{\infty}\sigma_m\int_{T}^{s}|G(s-t)|\phi(t)^{-1}|(\phi v)^{m+1}-(\phi u)^{m+1}|\,d t\le &\sum\limits_{m=1}^{\infty}\sigma_m(m+1)\int_{T}^{s}|G(s-t)|w(t)^m|v-u|\,d t,
\end{split}
\end{equation}
where $w$ is a some function satisfying $\min\{\phi(t) v(t),\phi(t) u(t)\} <w(t) < \max\{\phi(t) v(t),\phi(t) u(t)\}$ for all $t\in [T,s]$. From the Cor. \ref{cor:Bound} we have that $\phi(s) v(s)\le C_2<1/2$, where $v$ is a solution of \eqref{eq:IntegralForm}. Hence we can finally write
\begin{equation}
    \begin{split}
        |P v-P u|\le&\sum\limits_{m=1}^{\infty}\sigma_m(m+1)C_2^m\int_{T}^{s}e^{\alpha t}|G(s-t)|e^{-\alpha   t}|v-u|\,d t\\
        \le & \left\|v-u\right\|_B\sum\limits_{m=1}^{\infty}\sigma_m(m+1)C_2^m\int_{T}^{s}e^{\alpha t}|G(s-t)|\,d t\\
        \le& \left\|v-u\right\|_B\sum\limits_{m=1}^{\infty}\sigma_m(m+1)C_2^m\frac{1}{\tau}\int_{T}^{s}e^{\alpha t}\,d t\\
        =&e^{\alpha s}\left\|v-u\right\|_B\sum\limits_{m=1}^{\infty}\sigma_m(m+1)C^m\frac{1}{\tau \alpha}\biggl(1-e^{-\alpha (s- T)}\biggr)\\
        \le & e^{\alpha s}\left\|v-u\right\|_B\sum\limits_{m=1}^{\infty}\sigma_m(m+1)C_2^m\frac{1}{\tau \alpha},
        \end{split}
    \end{equation}
    where $||\cdot||_B$ is a Bielecki's norm defined as in \eqref{BieleckiNorm} and $C_2$ is a fixed number smaller than $1/2$ introduced earlier in \eqref{cor:Bound}.

     From the standard convergence criterion we know that the series in above converge. Moreover, we have
    \begin{equation}
    \begin{split}
       \sum\limits_{m=1}^{\infty}\frac{-\sigma_m(m+1)C_2^m }{\tau \alpha} =\frac{-C_2}{2\alpha \tau}\biggl(\, _2F_1(1,1.5;3;-2 C_2)- \, _2F_1(1.5,2;3;-2 C_2)\biggr)=:H(\alpha, T),
       \end{split}
    \end{equation}
    where $_2F_1$ is a hypergeometric function
    \begin{equation}
        \, _2 F_1 (a,b,c,z)=\sum\limits_{k=0}^{\infty}\frac{(a)_k (b)_k}{(c)_k}\frac{z^k}{k!},
    \end{equation}
    with
    \begin{equation}
        (a)_k=\left\{\begin{array}{ll}
            1 & \textrm{for } k=0, \\
            a(a+1)\cdots (a+n-1) &\textrm{for } k>0. 
        \end{array}\right.
    \end{equation}
    Now we are able to rewrite the main inequality of \eqref{ineq:SeriesContraction} in the following form
    \begin{equation}
        \left\|P(u)-P(v)\right\|_B\le  |H(\alpha,T)| \left\|u-v\right\|_B.
    \end{equation}
    Let us notice the inverse dependence of function $H(\alpha, T)$ on the parameter $\alpha$. Hence, choosing sufficiently large  $\alpha$ we can make expression in brackets smaller than $1$ and whence show that $P$ is a contraction in Bielecki's metric on considered interval. 
    
    From the  arbitrariness of $T^*$ we conclude that the operator $P$ is a contraction on the whole interval $[0,\infty]$.

From the numerical analysis and observations for large arguments it is justified to approximate function $v$ by some trigonometric function. Therefore, the natural candidate for the initial guess is $v_0$ satisfying $L v_0=0$, with appropriate initial conditions. Hence,
    \begin{equation} \label{def:V0}
        v_0(s)=B_0\cos{\tau s}+A_0\sin{\tau s}.
    \end{equation}
    Next, we can expand the kernel $G(z)$ in the integral in \eqref{eq:IntegralUndamped} and get
    \begin{equation}\label{eq:IterationProof}
    \begin{split}
     v(s)= &\biggl(B_0+\frac{1}{\tau}\sum\limits_{m=1}^{\infty}\sigma_m\int_{T}^{s}\sin{(\tau t)}\, \phi(t)^m v(t)^{m+1}\, d t\biggr)\cos{\tau s}\\
     &+\biggl(A_0-\frac{1}{\tau}\sum\limits_{m=1}^{\infty}\sigma_m\int_{T}^{s}\cos{(\tau t)}\, \phi(t)^m v(t)^{m+1}\, d t\biggr)\sin{\tau s}.  
     \end{split}
    \end{equation}
     It is easy notice that if a $v_0$ is a sum of trigonometric functions then the subsequent result of iteration will be also in the similar form. In the another words, function $v_n$ will contain only a linear combination of functions $\sin{\tau s}$ and $\cos{\tau s}$. We will use the mathematical induction to prove that the function $v_n$ can be reduced to the form 
    \begin{equation}\label{AsymExpressionV}
        v_n(s)\sim A_n \sin{\tau s}+B_n\cos{\tau s},\quad \textrm{as}\ s\to \infty,
    \end{equation}
    for all $n\in \mathbb{N}$. As will be shown the $A_n$ and $B_n$ coefficients in fact dependent only on the values of $A_{n-1}$, $B_{n-1}$ and on initial coefficients $A_0$ and $B_0$.

    The base step of mathematical induction of this  issue is crucial. From \eqref{def:V0} we have that the function $v_0$ is a linear sum of sine and cosine functions. If we substitute the $v_0$ to \eqref{eq:IterationProof} and use a binomial expansion we get
    \begin{equation}
     \begin{split}
        v_{1}=&\biggl(A_0-\frac{1}{\tau}\sum\limits_{m=1}^{\infty}\, \sum\limits_{\substack{j,k\\ j+k=m+1}}\binom{m+1}{j}\sigma_m A_{0}^{j}B_{0}^{m+1-j}J_1\biggr)\sin{\tau s}\\
        &+ \biggl(B_0+\frac{1}{\tau}\sum\limits_{m=1}^{\infty}\sum\limits_{\substack{j,k\\ j+k=m+1}}\binom{m+1}{j}\sigma_m A_{0}^{j}B_{0}^{m+1-j}J_2\biggr)\cos{\tau s},
    \end{split}  
    \end{equation}
    where 
    \begin{align}
        J_1& = \int_{T}^{s}e^{-\lambda t} \sin{(\tau t)}^j \cos{(\tau s)}^{k+1}\, d t,\\
        J_2& =  \int_{T}^{s}e^{-\lambda t} \sin{(\tau t)}^{j+1} \cos{(\tau s)}^{k}\, d t.
    \end{align}
    Now, we will compute the values of above integrals to get the analytic form of the function $v_n(s)$ for large values of $s$. Let us first consider integral $J_1$
    \begin{equation}
        \begin{split}
         J_1& = \int_{T}^{s}e^{-\lambda t} \sin{(\tau t)}^j \cos{(\tau s)}^{k+1}\, d t=\int_{0}^{s}e^{-\lambda t}\biggl(\frac{e^{\tau t i}-e^{-\tau t i}}{2 i}\biggr)^{j}\biggl(\frac{e^{\tau t i}+e^{-\tau t i}}{2 }\biggr)^{k+1}\, d t\\ 
         & = \int_{T}^{s}e^{-\lambda t}\sum\limits_{l=0}^{j}\sum\limits_{p=0}^{k+1}\binom{j}{l}\binom{k+1}{p}\frac{1}{2^{j+k+1}i^j}(-1)^l e^{\tau s i(j-2 l+k+1-2 p)}\,d t\\
         &= \sum\limits_{l=0}^{j}\sum\limits_{p=0}^{k+1}\binom{j}{l}\binom{k+1}{p}\frac{1}{2^{j+k+1}i^j}(-1)^l \int_{T}^{s}e^{-\lambda t+\tau t i(j-2 l+k+1-2 p)}\,d t\\
         &= \sum\limits_{l=0}^{j}\sum\limits_{p=0}^{k+1}\binom{j}{l}\binom{k+1}{p}\frac{1}{2^{j+k+1}i^j} \frac{(-1)^l}{\tau i(j-2 l+k+1-2 p)-\lambda}e^{-\lambda t+\tau t i(j-2 l+k+1-2 p)}\biggl|_{t=T}^{t=s}\\
         &\sim -\frac{1}{2^{j+k+1}}\sum\limits_{l=0}^{j}\sum\limits_{p=0}^{k+1}\binom{j}{l}\binom{k+1}{p} \frac{(-1)^{l+j}(-\lambda-i\tau(j-2 l+k+1-2 p))i^j e^{-\lambda T+\mu T i}}{(\tau (j-2 l+k+1-2 p))^2+\lambda^2},\quad \textrm{as}\quad s\to\infty.
        \end{split}
    \end{equation}
    The imaginary parts of above expressions have to sum up to zero. Using the Euler's formula we expanded the real trigonometric functions and we removed only the vanishing terms at the upper limit of integration which does not affect on the realness of the final function. Hence we get the final expression \eqref{CoefIntegral1} of $J_1$ for $v_1$. The above  steps can be repeated as well for $J_2$ to get final asymptotic form of that integral.
    
    As we have seen the asymptotic expression for function $v_1$ is a linear sum of sine and cosine functions, where the coefficients are dependent only on $A_0$ and $B_0$. Moreover, if we now assume that the asymptotic behavior of function $v_n$ for some $n$ is expressed by the formula \eqref{AsymExpressionV} then repeating the tedious calculations performed above for function $v_1$ we get 
    \begin{equation}
        v_{n+1}\sim A_{n+1} \sin{\tau s} +B_{n+1} \cos{\tau s}.
    \end{equation}
    Since both the base case and the inductive step have been performed,  by mathematical induction we conclude the asymptotic form of function $v_n$ \eqref{AsymExpressionV} for arbitrary  $n\in \mathbb{N}$.
\end{proof}
\begin{figure}
	\centering
	\includegraphics[scale=0.45]{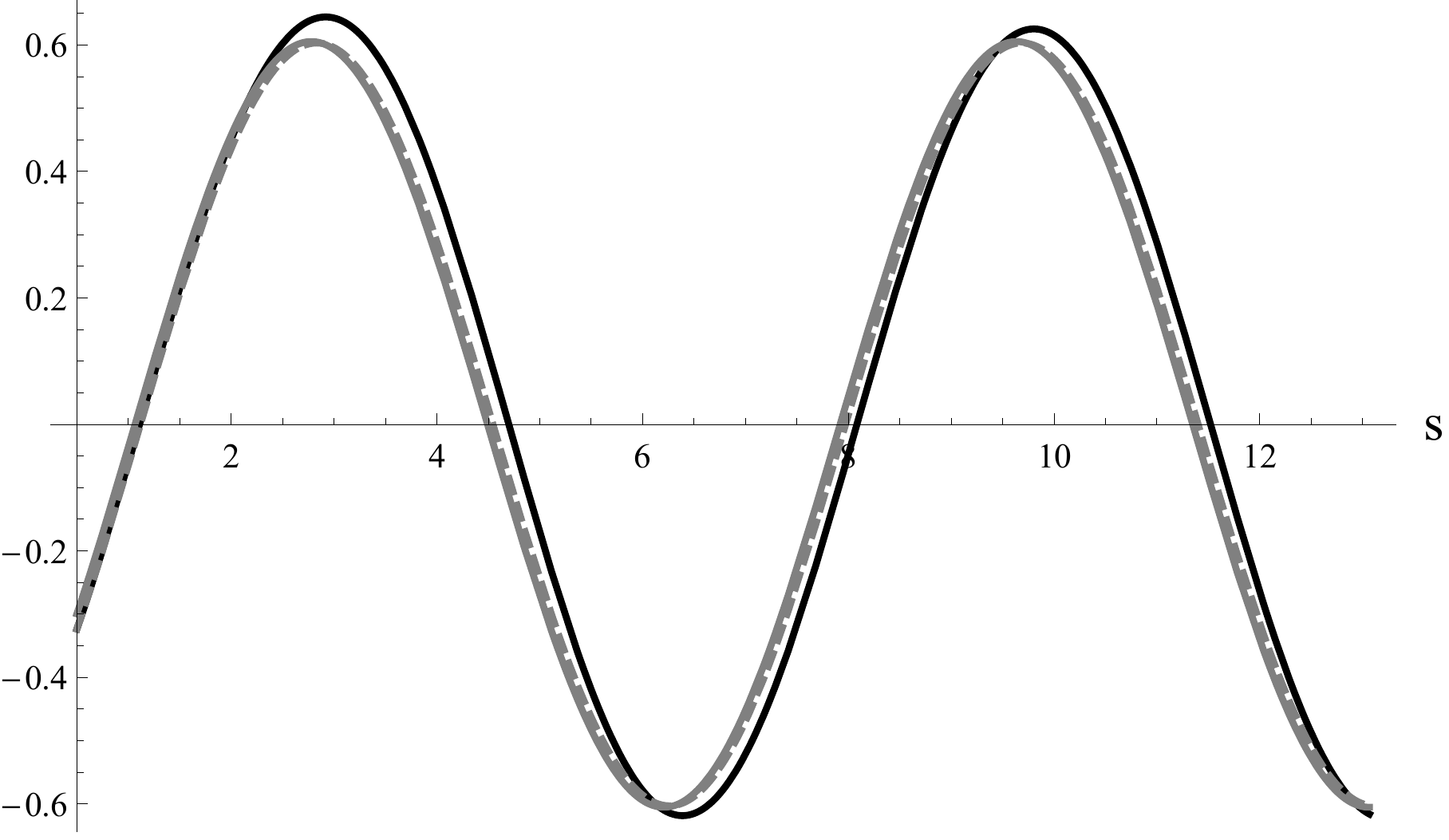}\hspace{2em}
	\includegraphics[scale=0.45]{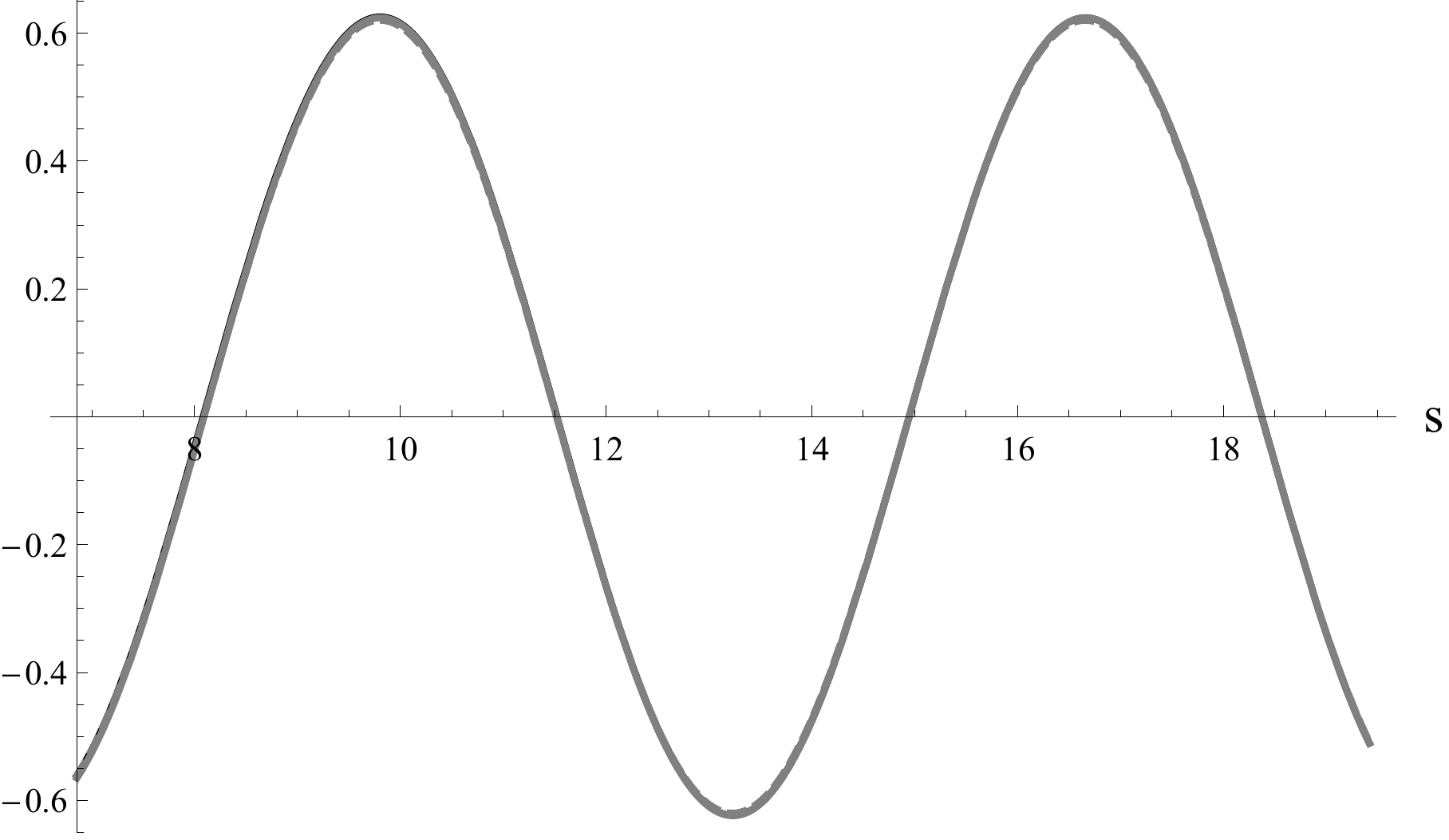}
	\caption{The asymptotic solutions $A_0 \sin{\tau s}+B_0\cos{\tau s}$ (dashed gray line) and  $A_6 \sin{\tau s}+B_6\cos{\tau s}$ (gray line) with  $T=0.5$ (left) and $T=6.855$ (right). The solid black line represents the numerical solution of \eqref{eq:IntegralForm} with $\epsilon=0.8$ .}
	\vspace{2em}
	\label{fig:AsymBehaviorLargeE}
	\includegraphics[scale=0.45]{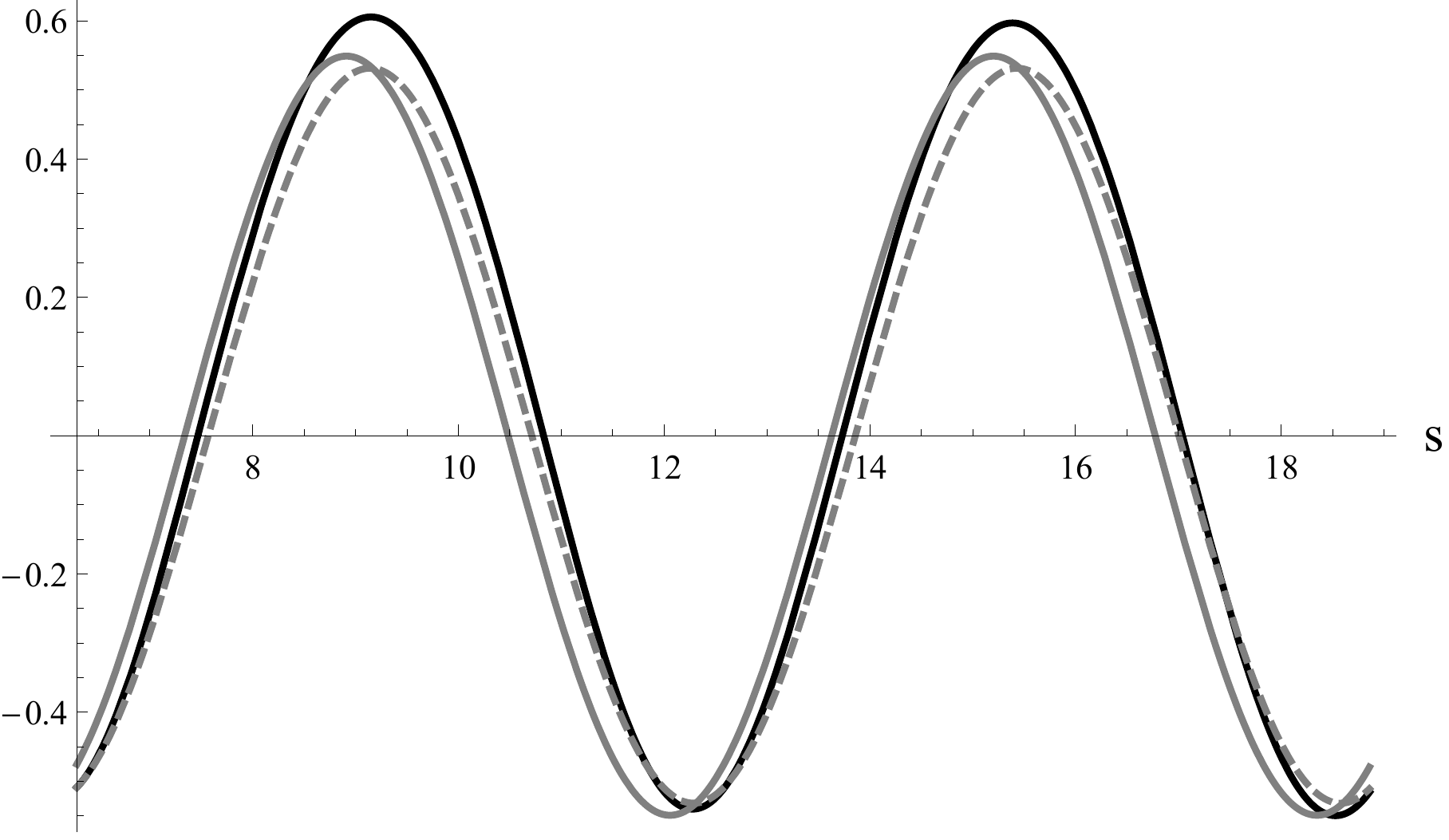}\hspace{2em}
	\includegraphics[scale=0.45]{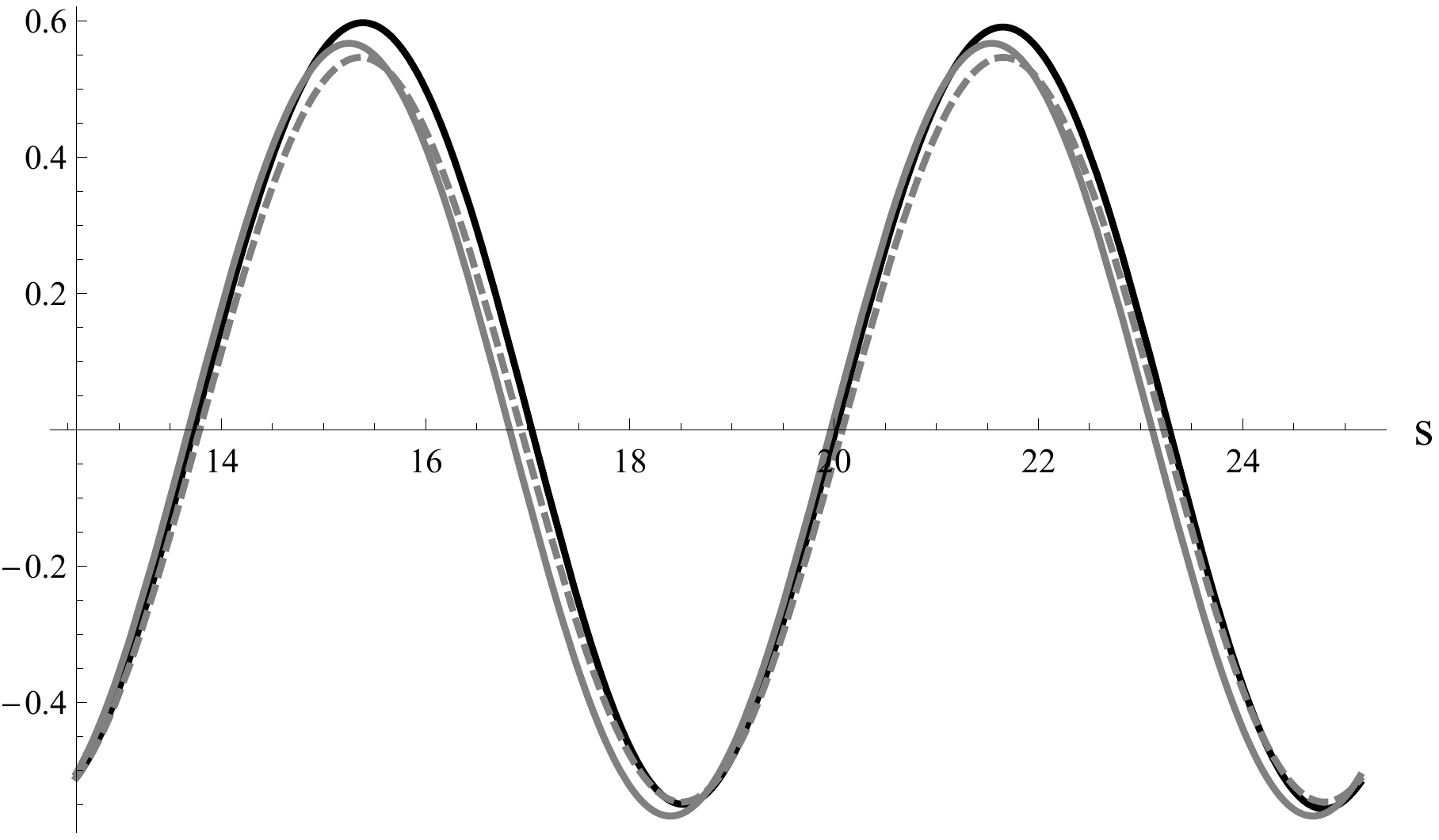}
	\caption{The asymptotic solutions $A_0 \sin{\tau s}+B_0\cos{\tau s}$ (dashed gray line) and  $A_6 \sin{\tau s}+B_6\cos{\tau s}$ (gray line) with  $T=6.291$ (left) and $T=12.582$ (right). The solid black line represents the numerical solution of \eqref{eq:IntegralForm} with $\epsilon=0.1$ .}
	\label{fig:AsymBehaviorSmallE}
\end{figure}
The asymptotic solution \eqref{AsymBehavior}  of eq. \eqref{eq:IntegralForm} describes the exact solution very well, especially for large value of T. As a illustration of the above result we present some numerical simulations. The asymptotic solution \eqref{AsymBehavior} with $n=0$ and $n=6$ is depicted in  Figs. \ref{fig:AsymBehaviorLargeE} and \ref{fig:AsymBehaviorSmallE}. For simplicity we used a finite sum in formulas \eqref{AsymBehaviorCoeff} to calculate an adequate coefficients. Hence, in numerical analysis it is more convenient to use the following approximate expression for $A_n$ and $B_n$
\begin{equation}
    \begin{split}
        A_{n+1}\approx& \frac{v'(T
    ) \cos (T \tau )}{\tau }+v(T) \sin (T \tau )-\frac{1}{\tau}\sum\limits_{m=1}^{N}\, \sum\limits_{\substack{j,k\\ j+k=m+1}}\binom{m+1}{j}\sigma_m A_{n}^{j}B_{n}^{m+1-j}J_1,\\
        B_{n+1}\approx& v(T) \cos (T \tau )-\frac{v'(T) \sin (T \tau )}{\tau }+\frac{1}{\tau}\sum\limits_{m=1}^{N}\sum\limits_{\substack{j,k\\ j+k=m+1}}\binom{m+1}{j}\sigma_m A_{n}^{j}B_{n}^{m+1-j}J_2,
    \end{split}
\end{equation}
where $J_1$ and $J_2$ are defined as before and $N=30$ in our analysis.

\section{Conclusion}
We have provided the rigorous asymptotic analysis of a governing equation modelling the capillary rise phenomenon in a vertical narrow tube. The perturbation analysis of the main equation was a nontrival task due the a appearance of a non-Lipschitz component in the equation. However, the new approach was proposed where the convergence of the exact solution to the unperturbed solution was considered separately on two intervals dependent on the decreasing parameter. For one interval the nonlinear component of the governing equation is not a Lipschitz continuous function. To deal with this problem we use the fact that the length of that interval decreases with $\epsilon$ what allow us to estimate the remainder from above by a function decreasing to zero as $\epsilon$ goes to zero. We conclude the final statement performing a comprehensive calculations. 

In the subsequent section we investigated the asymptotic behavior of a solution for large values of independent variable. The method of finding successive approximation to the exact oscillatory solution was introduced. Finally, the numerical analysis was presented to compare the numerical solution of the governing equation with the approximate periodic asymptotic solutions. 

\bibliography{oscillations}

\begin{thebibliography}{10}

\bibitem{bielecki56}
Adam Bielecki.
\newblock Une remarque sur la m{\'e}thode de {B}anach-{C}acciopoli-{T}ikhonov
  dans la th{\'e}orie des {\'e}quations diff{\'e}rentielles ordinaires.
\newblock {\em Bull. Acad. Polon. Sci}, 4(5):261--264, 1956.

\bibitem{blake}
TD~Blake and JM~Haynes.
\newblock Kinetics of liquidliquid displacement.
\newblock {\em Journal of colloid and interface science}, 30(3):421--423, 1969.

\bibitem{cheng11}
Sy-Chyi Cheng, Min-Zong Huang, and Jentaie Shiea.
\newblock Thin layer chromatography/mass spectrometry.
\newblock {\em Journal of Chromatography A}, 1218(19):2700--2711, 2011.

\bibitem{das13}
Siddhartha Das and Sushanta~K Mitra.
\newblock Different regimes in vertical capillary filling.
\newblock {\em Phys. Rev.}, 87(6):063005, 2013.

\bibitem{Guc13}
John Guckenheimer and Philip~J Holmes.
\newblock {\em Nonlinear oscillations, dynamical systems, and bifurcations of
  vector fields}, volume~42.
\newblock Springer Science \& Business Media, 2013.

\bibitem{joos}
P~Joos, P~Van~Remoortere, and M~Bracke.
\newblock The kinetics of wetting in a capillary.
\newblock {\em Journal of colloid and interface science}, 136(1):189--197,
  1990.

\bibitem{Kam78}
IV~Kamenev.
\newblock An integral criterion for oscillation of linear differential
  equations of second order.
\newblock {\em Math. Notes}, 23(2):136--138, 1978.

\bibitem{Kon99}
Qingkai Kong.
\newblock Interval criteria for oscillation of second-order linear ordinary
  differential equations.
\newblock {\em J. Math. Anal. Appl.}, 229(1):258--270, 1999.

\bibitem{loren2002}
{\'E}lise Lorenceau, David Qu{\'e}r{\'e}, Jean-Yves Ollitrault, and Christophe
  Clanet.
\newblock Gravitational oscillations of a liquid column in a pipe.
\newblock {\em Phys. Fluids}, 14(6):1985--1992, 2002.

\bibitem{masoodi12}
Reza Masoodi, Ehsan Languri, and Alireza Ostadhossein.
\newblock Dynamics of liquid rise in a vertical capillary tube.
\newblock {\em J. Colloid Interface Sci.}, 389(1):268--272, 2013.

\bibitem{Nay08}
Ali~H Nayfeh and Dean~T Mook.
\newblock {\em Nonlinear oscillations}.
\newblock John Wiley \& Sons, 2008.

\bibitem{Phi89}
Ch~G Philos.
\newblock Oscillation theorems for linear differential equations of second
  order.
\newblock {\em Arch. Math.}, 53(5):482--492, 1989.

\bibitem{Plo18}
{\L}ukasz P{\l}ociniczak and Mateusz {\'S}wita{\l}a.
\newblock Monotonicity, oscillations and stability of a solution to a nonlinear
  equation modelling the capillary rise.
\newblock {\em Physica D: Nonlinear Phenomena}, 362:1--8, 2018.

\bibitem{pop}
Mihail~Nicolae Popescu, John Ralston, and Rossen Sedev.
\newblock Capillary rise with velocity-dependent dynamic contact angle.
\newblock {\em Langmuir}, 24(21):12710--12716, 2008.

\bibitem{quere1997}
David Qu{\'e}r{\'e}.
\newblock Inertial capillarity.
\newblock {\em Europhys. Lett.}, 39(5):533, 1997.

\bibitem{ramon08}
Guy Ramon and Alexander Oron.
\newblock Capillary rise of a meniscus with phase change.
\newblock {\em J. Colloid Interface Sci.}, 327(1):145--151, 2008.

\bibitem{szekely71}
Julian Szekely, AW~Neumann, and YK~Chuang.
\newblock The rate of capillary penetration and the applicability of the
  {W}ashburn equation.
\newblock {\em J. Colloid Interface Sci.}, 35(2):273--278, 1971.

\bibitem{wash1921}
Edward~W Washburn.
\newblock The dynamics of capillary flow.
\newblock {\em Phys. Rev.}, 17(3):273, 1921.

\bibitem{Zha18}
Xiang Zhang.
\newblock Dynamics of a nonlinear equation modelling the capillary rise.
\newblock {\em Physica D: Nonlinear Phenomena}, 384:34--38, 2018.

\bibitem{zhmud}
BV~Zhmud, F~Tiberg, and K~Hallstensson.
\newblock Dynamics of capillary rise.
\newblock {\em J. Colloid Interface Sci.}, 228(2):263--269, 2000.

\end{thebibliography}
\bibliographystyle{plain}
\end{document}